\documentclass[12pt]{amsart}
\usepackage{amsmath,amssymb,amscd,array, mathrsfs }

\usepackage{amsmath,amscd,amssymb,amsthm,array}
\usepackage{color}

\usepackage{amsmath,amssymb,mathrsfs,amsthm, tikz-cd,mathrsfs}
\usepackage{comment}
\def\url#1{\expandafter\s

\tring\csname #1\endcsname}

\def\mmat #1,#2,#3,#4,{\text{\small\arraycolsep=3pt $
\begin{pmatrix}#1&#2\\#3&#4\end{pmatrix}$}}

\usepackage{hyperref}
\usepackage{capt-of}

\usepackage{multirow}
\usepackage[all]{xy}

\usepackage{comments}
\newComments\SBe{Said}{blue}
\newComments\SBo{Sofiane}{blue}
\newComments\AM{Nacer}{blue}
\newComments\DL{DL}{red}
\newComments\QEh{QEh}{blue}

\def\mmat #1,#2,#3,#4,{\text{\small\arraycolsep=3pt $
\begin{pmatrix}#1&#2\\#3&#4\end{pmatrix}$}}

\usepackage{lscape}
\usepackage{tikz-cd}
\usepackage{enumerate}

\usepackage{DLdef1}
\usepackage{multicol}

\def\mmat #1,#2,#3,#4,{\text{\small\arraycolsep=3pt $
\begin{pmatrix}#1&#2\\#3&#4\end{pmatrix}$}}

\renewcommand {\ssbegin}[2][*]
 {\refstepcounter{subsection}%
\if#1*
\addcontentsline{toc}{subsection}{\thesubsection.\hskip 1pc #2}%
\else
\addcontentsline{toc}{subsection}{\thesubsection.\hskip 1pc #2. #1}%
\fi
 \def \secno {\gdef \secno {}{\ssecfont
\thesubsection.\hskip 2ex}%
 }%
 \begin{#2}}

\renewcommand {\sssbegin}[2][*]
 {\refstepcounter{subsubsection}
\if#1*
\addcontentsline{toc}{subsubsection}{\thesubsubsection.\hskip 1pc #2}%
\else
\addcontentsline{toc}{subsubsection}{\thesubsubsection.\hskip 1pc #2. #1}
\fi
 \def \secno {\gdef \secno {}{\ssecfont \thesubsubsection.\hskip 2ex}%
 }%
 \begin{#2}}

\renewcommand {\parbegin}[2][*]
 {\refstepcounter{paragraph}
\if#1*
\addcontentsline{toc}{paragraph}{\theparagraph.\hskip 1pc #2}%
\else
\addcontentsline{toc}{paragraph}{\theparagraph.\hskip 1pc #2. #1}
\fi
 \def \secno {\gdef \secno {}{\ssecfont \theparagraph.\hskip 2ex}%
 }%
 \begin{#2}}

\rmnameii{etr}{etr}
\rmnameii{evv}{ev}

\DeclareMathOperator{\K}{\mathbb{K}}
\DeclareMathOperator{\R}{\mathbb{R}}

\DeclareMathOperator{\spa}{span}

\newcommand {\A}{{\cal{A}}}

\newcommand{\h}{\mathfrak{h}}
\newcommand{\Ll}{{\mathrm{L}}}
\newcommand{\Rr}{{\mathrm{R}}}

    \newcommand{\Om}{\Omega}

\newcommand{\al}{\alpha}
\newcommand{\be}{\beta}

\newcommand{\la}{\lambda}

\newcommand{\om}{\omega}
\newcommand{\prs}{\langle \cdot,\cdot\rangle}
\newcommand{\g}{\mathfrak{g}}
\def\br{[\cdot,\cdot]}
\newcommand{\esp}{\quad\mbox{and}\quad}

\begin{document}

\title[Pre-symplectic left-symmetric algebras]{Pre-symplectic left-symmetric algebras  }

\author{Sa\"id Benayadi}
\address {Université de Lorraine, Laboratoire IECL, CNRS-UMR 7502, UFR MIM, 3 rue
Augustin Fresnel, BP 45112, 57073 Metz Cedex 03, France}
\email{said.benayadi@univ-lorraine.fr}

\author{Hamza El Ouali}
\address{Division of Science and Mathematics, New York University Abu Dhabi, P.O. Box 129188, Abu Dhabi, United Arab Emirates.}
\email{hamza.el.ouali@nyu.edu}

\thanks{H.E. was supported by the grant NYUAD-065}


\keywords{Left-symmetric algebras, Novikov algebras, $\Ll$-algebras, Lie algebras, pre-symplectic structures, Levi-Civita product, double extensions, $T^*$-extension.}

 \subjclass[2020]{17A32; 17A70; 17A60; 17D25}

\begin{abstract}
A pre-symplectic left-symmetric algebra $\mathcal{A}$ is a left-symmetric algebra endowed with a nondegenerate skew-symmetric bilinear form $\omega$ such that all left multiplication operators are symmetric with respect to $\omega$. In this setting, the underlying subadjacent Lie algebra $(\mathcal{A}^{-},\omega)$ forms a flat $T$-symplectic Lie algebra. This paper provides a systematic investigation into the structural properties of pre-symplectic left-symmetric algebras. In particular, we introduce a distinguished subclass termed \emph{Milnor pre-symplectic algebras}, and prove that any pre-symplectic left-symmetric algebra whose commutator ideal is nondegenerate necessarily belongs to this subclass.  Next, we investigate the Levi-Civita product associated with symplectic Lie algebras. We show that this product always yields a right-symmetric algebra, and we prove that it forms a left-symmetric algebra if and only if it is associative. Furthermore, we provide a characterization of symplectic Lie algebras in terms of representations of left-symmetric algebras, and conclude by establishing a construction method for these structures known as the $T^*$-extension.

Furthermore, we develop a double extension procedure for pre-symplectic left-symmetric algebras by means of commutative associative algebras. We show that every such algebra with a degenerate commutator ideal can be reconstructed via this extension process. More generally, we show that any pre-symplectic left-symmetric algebra is either a Milnor pre-symplectic algebra or can be obtained through a finite sequence of successive double extensions starting from a Milnor pre-symplectic algebra. As a concrete application of these structural results, we provide a complete classification of pre-symplectic left-symmetric algebras of dimension less than or equal to $4$.
\end{abstract}


\maketitle

\thispagestyle{empty}
\setcounter{tocdepth}{2}

\section{Introduction}
All algebras and all vector spaces considered in this paper are finite-dimensional algebras (resp. vector spaces) over a commutative field $\mathbb{K}$ of characteristic $0$. Some results will be established when $\mathbb{K} = \mathbb{R}$ or $\mathbb{K} = \mathbb{C}$ or $\mathbb{K}$ is algebraically closed. Whenever necessary, to avoid ambiguity, we will specify K. Note that some results in this paper remain true in the case of infinite-dimensional algebras.

Before presenting the main results of this paper concerning pre-symplectic left-symmetric algebras, we briefly recall the known framework for flat pseudo-Euclidean Lie algebras and symplectic Lie algebras.

\subsection{The symmetric setting}

A pseudo-Riemannian Lie group is a Lie group $G$ endowed with a left-invariant pseudo-Riemannian metric $\mu$. Its Lie algebra $\mathfrak{g} = T_eG$, equipped with the bilinear form $\langle \cdot, \cdot \rangle = \mu_e$, is called a pseudo-Riemannian (or pseudo-Euclidean) Lie algebra. It is well known that there exists a unique torsion-free connection compatible with the metric, called the \emph{Levi-Civita connection}. This connection induces a bilinear product on $\mathfrak{g}$, called the \emph{Levi-Civita product}, which is defined by the Koszul formula:
\[
2\langle u \bullet v, w \rangle
= \langle [u,v], w \rangle
- \langle [v,w], u \rangle
+ \langle [w,u], v \rangle,
\qquad \text{for all } u,v,w \in \mathfrak{g}.
\]
This product satisfies the following identities:
\[
u \bullet v - v \bullet u = [u,v], \qquad
\langle u \bullet v, w \rangle + \langle v, u \bullet w \rangle = 0,
\qquad \text{for all } u,v,w \in \mathfrak{g}.
\]
In the framework of pseudo-Euclidean non-associative (super)algebras, this construction has been extended in \cite{ABE, BBE}. 

The curvature tensor of the Levi-Civita connection at the identity element $e$ is given by
\[
\mathcal{R}(u,v) := \Ll_{[u,v]} - [\Ll_u, \Ll_v],
\]
where \(\Ll_u\) denotes the left multiplication operator defined by
$
\Ll_u(v) = u \bullet v,$ for all $u,v \in \mathfrak{g}.$

A pseudo-Riemannian Lie group $(G,\mu)$ is said to be \emph{flat} if its curvature $\mathcal{R}$ vanishes identically. In this case, the associated Lie algebra $(\mathfrak{g}, \br, \prs)$ is called a \emph{flat pseudo-Riemannian \textup{(}or pseudo-Euclidean\textup{)} Lie algebra}. The vanishing of the curvature is equivalent to the fact that $(\mathfrak{g}, \bullet)$ is a left-symmetric algebra. Consequently, a flat pseudo-Riemannian Lie algebra can be viewed as a left-symmetric algebra endowed with a symmetric nondegenerate bilinear form for which all left multiplication operators are skew-symmetric with respect to this form.

In the case where the metric is Riemannian (or Euclidean), i.e., positive or negative definite, J.~Milnor \cite{M} proved that a Riemannian Lie algebra $(\mathfrak{g}, \br, \prs)$ is flat if and only if $\mathfrak{g}$ decomposes  as $\mathfrak{g} = \mathfrak{b} \oplus \mathfrak{u}$, where $\mathfrak{u}$ is an abelian ideal, $\mathfrak{b}$ is an abelian subalgebra, and $\mathrm{ad}_u$ is skew-symmetric with respect to $\prs$ for all $u \in \mathfrak{b}$.

For arbitrary signatures, several works have investigated flat structures. In the Lorentzian case (signature $(1,n-1)$). The authors in \cite{Medina2} introduced a construction method called the \emph{double extension} for flat pseudo-Euclidean Lie algebras, which is an analogue of the double extension procedure for quadratic Lie algebras developed by A.~Medina and P.~Revoy \cite{MR}. They proved that flat Lorentzian nilpotent Lie algebras can be obtained via this procedure. Later, M.~Ait Ben Haddou, M.~Boucetta, and H.~Lebzioui \cite{ABL} further investigated flat Lorentzian Lie algebras, proving that those with a degenerate center can be obtained by the double extension procedure. In addition, it was shown in \cite{BL1} that non-unimodular flat Lorentzian Lie algebras can also be constructed using this approach. A complete classification of flat Lorentzian nilpotent Lie algebras was later obtained in \cite{Bajo}, and a full description of all flat Lorentzian Lie algebras was provided in \cite{Boucetta}.

For signature $(2,n-2)$, M.~Boucetta and H.~Lebzioui \cite{BL2} studied flat pseudo-Euclidean Lie algebras and showed that every nilpotent flat pseudo-Euclidean Lie algebra of this signature arises as a double extension of a nilpotent flat Lorentzian Lie algebra. Moreover, in \cite{LH}, the author proved that any unimodular flat pseudo-Euclidean Lie algebra of signature $(2,n-2)$ with a degenerate center arises as a double extension of a unimodular flat Lorentzian Lie algebra.

In a related direction, H.~Lebzioui \cite{LH1} studied the class $\mathcal{C}$ of flat pseudo-Euclidean Lie algebras $(\mathfrak{g}, \br, \prs)$ whose derived ideal $[\mathfrak{g},\mathfrak{g}]$ is contained in the left annihilator of the Levi-Civita product, a condition equivalent to $(\mathfrak{g},\bullet, \prs)$ being a pseudo-Euclidean left-symmetric $\Ll$-algebra. It was shown that any Lorentzian algebra in $\mathcal{C}$ with a degenerate derived ideal can be obtained via a flat double extension. This study was further extended in \cite{BO}, providing an inductive description of all elements of $\mathcal{C}$ in arbitrary signatures through double extensions by $1$ and $2$ dimensional algebras. Furthermore, pseudo-Euclidean Novikov algebras with symmetric left multiplications were investigated in the Lorentzian case in \cite{LH2}, and generalized to arbitrary signatures in \cite{BO1, BEL}, where they were shown to form a subclass of pseudo-Euclidean left-symmetric $\Ll$-algebras.

We shall now describe the situation for symplectic structures on Lie algebras.
\subsection{The skew-symmetric setting}

A \emph{symplectic Lie group} is a Lie group $G$ endowed with a left-invariant symplectic form $\Omega$. The associated Lie algebra $\mathfrak{g}=T_eG$, equipped with $\omega=\Omega_e$, is called a \emph{symplectic Lie algebra}. A connection $\nabla$ on $G$ is a \emph{symplectic connection} if it is torsion-free and compatible with $\Omega$, i.e., $\nabla \Omega = 0$. Such a connection always exists, although  it is not unique (see \cite{BCGRS}). The systematic study of symplectic Lie groups and algebras was pioneered by Bon-Yao Chu \cite{ch}, Lichnerowicz, Medina, and Revoy, as well as Dardié, Medina, and Fischer \cite{MRE, DM, LM, F}.

In \cite{BB}, a \emph{natural} symplectic connection on symplectic Lie groups was introduced, defined by:
\[
\omega(u\star v, w) = \tfrac{1}{3}\,\omega([u,v],w) + \tfrac{1}{3}\,\omega([u,w],v), \qquad \text{for all } u,v,w\in\mathfrak{g}.
\]
Analogously to the pseudo-Riemannian setting, one defines the curvature tensor associated with this connection. A symplectic Lie group $(G,\Omega)$ is said to be \emph{flat} if its curvature vanishes identically. In this case, the underlying Lie algebra $(\mathfrak{g}, \br, \omega)$ is called a \emph{flat symplectic Lie algebra}. Flatness is equivalent to the condition that the product $\star$ endows $\mathfrak{g}$ with a left-symmetric algebra structure. This framework was further extended to the superalgebraic setting in \cite{BE}, where flat symplectic Lie superalgebras were investigated. Despite its importance, the flat symplectic case has received little attention from experts. General results on flat quasi-Frobenius Lie algebras appear in \cite{V}, while the symplectic connection above was studied in \cite{BELL}.

Recall that on any symplectic Lie group $(G,\Omega)$, one can also consider the left-invariant connection $\nabla$ defined by:
\begin{equation}\label{eq:flat-conn}
\omega(\nabla_u v, w) = -\omega(v,[u,w]), \qquad \text{for all } u,v,w\in\mathfrak{g}.
\end{equation}
In \cite{Wadia}, the authors investigated the specific case where this induced left-symmetric product satisfies the Novikov identity, proving that the left-symmetric product associated with a symplectic Lie algebra is Novikov if and only if it is associative.

Recall that on any symplectic Lie group $(G,\Omega)$, one can also consider the left-invariant connection $\nabla$ defined by:
\begin{equation}\label{eq:flat-conn}
\omega(\nabla_u v, w) = -\omega(v,[u,w]), \qquad \text{for all } u,v,w\in\mathfrak{g}.
\end{equation}
In \cite{Wadia}, the authors investigated the specific case where this induced left-symmetric product satisfies the Novikov identity, proving that the left-symmetric product associated with a symplectic Lie algebra is Novikov if and only if it is associative.

Furthermore, a \emph{T-symplectic Lie algebra} $(\mathfrak{g}, \br, \omega)$ is a Lie algebra $(\mathfrak{g}, \br)$ endowed with a skew-symmetric nondegenerate bilinear form $\omega$ (which is not necessarily a $2$-cocycle). In \cite{BE1}, the authors introduced the notion of the Levi-Civita product associated with a T-symplectic Lie algebra $(\mathfrak{g}, \br, \omega)$, which is analogous to the Levi-Civita product of a pseudo-Euclidean Lie algebra. This product $\bullet$ is the unique bilinear map on the underlying vector space $\g$ that satisfies the following properties:
\[
[u,v] = u\bullet v - v\bullet u, \qquad \omega(u\bullet v,w) = \omega(v, u\bullet w),
\]
for all $u,v,w\in\g$. The product $\bullet$ is also characterized by the following Koszul formula:
\[
2\omega(u\bullet v, w) = \omega([u,v], w) - \omega([v,w], u) + \omega([w,u], v).
\]

\subsection{Outline of the paper}

The paper is organized as follows:

We introduce the notion of pre-symplectic left-symmetric algebras, which are left-symmetric algebras endowed with a nondegenerate skew-symmetric bilinear form $\omega$ such that all left multiplication operators are symmetric with respect to $\omega$. In this case, the subadjacent algebra $(\mathfrak{g}^-, \omega)$ becomes a $T$-symplectic Lie algebra, and by the uniqueness of the Levi-Civita product, the left-symmetric product $\bullet$ coincides with the Levi-Civita product associated with $(\mathfrak{g}^-, \omega)$. Consequently, $(\mathfrak{g}^-, \omega)$ is a flat $T$-symplectic Lie algebra. Thus, the study of flat $T$-symplectic Lie algebras is equivalent to investigating left-symmetric algebras equipped with a nondegenerate skew-symmetric bilinear form satisfying $\omega(u \bullet v, w) - \omega(v, u \bullet w) = 0$ for all $u, v, w \in \mathfrak{g}$.  

In section 2, we introduce a distinguished subclass called \emph{Milnor} pre-symplectic algebras (see Definition~\ref{defmilnor} and Proposition~\ref{milnor}). In particular, we prove that any pre-symplectic left-symmetric algebra whose commutator ideal is nondegenerate is necessarily a Milnor pre-symplectic algebra (see Theorem~\ref{  ideal non degenere}). Furthermore, we show that if $(\mathcal{A}, \bullet, \omega)$ is a pre-symplectic left-symmetric algebra such that $[\mathcal{A}, \mathcal{A}]$ is nondegenerate, then $(\mathcal{A}, \bullet)$ is a Novikov algebra (see Corollary~\ref{NV}). Under the same nondegeneracy condition, we establish that $\mathcal{A}^{-}$ is nilpotent if and only if the product $\bullet$ is trivial (see Corollary~\ref{nilpotent}).

We then introduce the notion of representations of pre-symplectic left-symmetric algebras. More precisely, given such an algebra $(\mathcal{A}, \bullet, \omega)$, we define a new bilinear product $\star$ on $\mathcal{A}$ by means of the relation $\omega(u \bullet v, w) = \omega(u, v \star w)$ for all $u, v, w \in \mathcal{A}$. This allows us to establish a characterization of pre-symplectic left-symmetric algebras in terms of $\mathcal{A}-bimodule$ isomorphisms, proving that the existence of a suitable bilinear form is equivalent to the isomorphism between certain naturally associated $\mathcal{A}-bimodules$ (see Theorem~\ref{Thm7.11}). Finally, motivated by this representation and inspired by the $T^*$-extension for quadratic algebras introduced by M.~Bordemann \cite{Bordemann}, we provide a general construction method for pre-symplectic left-symmetric algebras (see Theorem~\ref{construction}).

In Section 3, we investigate the Levi-Civita product associated with symplectic Lie algebras. We show that the Levi-Civita product associated with a symplectic Lie algebra is a right-symmetric algebra, and we show that it forms a left-symmetric algebra if and only if it is an associative algebra. We also provide a characterization of symplectic Lie algebras in terms of representations of left-symmetric algebras, and conclude by establishing a construction method for these structures known as the $T^*$-extension (see Theorem~\ref{constriction12})

and  we provide a construction method for pre-symplectic left-symmetric algebras  (see Theorem~\ref{constriction12})

In Sections 4 and 5, we introduce the notion of double extensions of pre-symplectic left-symmetric algebras by commutative associative algebras (see Theorem~\ref{Double-extension}). In particular, we provide the explicit construction of double extensions of pre-symplectic left-symmetric algebras by one- and two-dimensional commutative associative algebras (see Theorems~\ref{double-ex1} and~\ref{double-extension2}).

In Section 6, we prove that any pre-symplectic left-symmetric algebra whose commutator ideal is degenerate can be obtained via a sequence of double extensions of a pre-symplectic left-symmetric algebra by a commutative associative algebra (see Theorem~\ref{Double-generale}). We then show that any pre-symplectic left-symmetric algebra is either a Milnor pre-symplectic algebra or can be obtained by a finite sequence of double extensions by a commutative associative algebra starting from a Milnor pre-symplectic algebra (see Theorem~\ref{Milnor0}).  In particular, we show that any pre-symplectic left-symmetric algebra whose commutator ideal is degenerate can be obtained via a sequence of double extensions by a one- or two-dimensional commutative associative algebra over the field $\mathbb{R}$ (see Theorem~\ref{db-1-2}), or by a one-dimensional algebra over an algebraically closed field (see Theorem~\ref{Closed}). Furthermore, we prove that any pre-symplectic left-symmetric algebra over $\mathbb{R}$ (resp., an algebraically closed field) is either a Milnor pre-symplectic algebra or can be obtained by a finite sequence of double extensions by a one- or two-dimensional commutative associative algebra (resp., a one-dimensional algebra) starting from a Milnor pre-symplectic algebra (see Theorem~\ref{Milnor1} and Theorem~\ref{Milnor10}, respectively).


In Section 7, as an application of the double extension procedure, we provide the complete classification of pre-symplectic left-symmetric algebras of dimension less than or equal to $4$ (see Theorem~\ref{classif4}).

Throughout this work, all vector spaces are assumed to be finite dimensional over field a commutative $\mathbb{K}$ of characteristic zero.  A \emph{symplectic vector space} $(E, \omega)$ is a  vector space $E$ endowed with a nondegenerate, skew-symmetric bilinear form $\omega$. A \emph{symplectic basis} of $(E, \omega)$ is a basis $\{e_1, \dots, e_{2n}\}$ of $E$ such that 
\[
\omega(e_{2i-1}, e_{2i}) = -\omega(e_{2i}, e_{2i-1}) = 1, \qquad \text{for } i = 1, \dots, n,
\]
with all other pairings between basis elements being zero.  For elements \(x,y\in E\) we denote by \(x^*,y^*\in E^*\) their duals . For every \(x^*\otimes y^*\in E^*\otimes E^*\) and  \(u\otimes v\in E\otimes E\), we define the pairing
\[
\langle x^*\otimes y^*, u \otimes v\rangle :=x^*(u)\,y^*(v).
\]
The wedge is given by
\[
x^*\wedge y^*=x^*\otimes y^* -y^*\otimes x^*.
\]
Let $F$ be a subspace of $(E, \omega)$. The orthogonal complement of $F$ with respect to $\omega$ is the subspace defined by $F^{\perp} = \{x \in E \mid \omega(x, y) = 0 \text{ for all } y \in F\}$. The subspace $F$ is said to be \emph{degenerate} (resp., \emph{nondegenerate}, \emph{totally isotropic}, \emph{Lagrangian}) if $F \cap F^{\perp} \neq \{0\}$ (resp., $F \cap F^{\perp} = \{0\}$, $F \subset F^{\perp}$, $F = F^{\perp}$). Given an endomorphism $f : E \longrightarrow E$, its adjoint $f^*$ with respect to $\omega$ is the endomorphism of $E$ uniquely defined by $\omega(f(x), y) = \omega(x, f^*(y))$ for all $x, y \in E$.

\section{Pre-symplectic left-symmetric algebras where the commutator ideal is nondegenerate}

In this section, we introduce the notion of pre-symplectic left-symmetric algebras. We then study their fundamental properties and give a characterization of pre-symplectic left-symmetric algebras whose commutator ideal is nondegenerate.

Let $(\A, \bullet)$ be a non-associative algebra.  
We denote by $\Ll^\bullet, \Rr^\bullet : \A \to \mathrm{End}(\A)$ the left and right multiplication operators, respectively.  
For any $u \in \A$, these operators are defined by
\[
\Ll^\bullet_u(v) := u \bullet v, \qquad 
\Rr^\bullet_u(v) := v \bullet u, \qquad \text{for all }\, v \in \A.
\]
On the underlying vector space $\A$, we
define the corresponding commutator $\br_\bullet$ of the product $\bullet$ by
$$[u, v]_\bullet :=u\bullet v -v\bullet u,\; \text{ for all  }  u,v\in\A.$$ 
The algebra $(\A, [\; , \; ]_\bullet)$ will be denoted by  $\A^-$. The algebra $(\A, \bullet)$ is called Lie-admissible
algebra, if $\A^-$ is a Lie algebra.  In this case,  $\A^-$ is called the sub-adjacent Lie algebra of $(\A, \bullet)$ and the latter is called a compatible (Lie-admissible) algebra structure on
the Lie algebra $\A^-$.\\

Let us recall some basic definitions. Let $(\mathcal{A},\bullet)$ be a nonassociative algebra. The \emph{associator} is the trilinear map $(\cdot, \cdot, \cdot): \mathcal{A} \times \mathcal{A} \times \mathcal{A} \to \mathcal{A}$ defined by
\[
(u,v,w) = (u \bullet v)\bullet w - u \bullet (v \bullet w).
\]

\begin{enumerate}
    \item $(\mathcal{A},\bullet)$ is \emph{associative} if its associator vanishes identically, i.e., 
    \[
    (u,v,w)=0, \quad \text{ for all } u,v,w \in \mathcal{A}.
    \]
    
    \item $(\mathcal{A},\bullet)$ is a \emph{left-symmetric algebra} (resp., a \emph{right-symmetric algebra}) if 
    \[
    (u,v,w)=(v,u,w) \quad \text{\Big(resp., } (u,v,w)=(u,w,v)\text{\Big)}, \quad \text{ for all } u,v,w \in \mathcal{A}.
    \]
    Left-symmetric algebras are well-known to be Lie-admissible.

   

    \item $(\mathcal{A},\bullet)$ is an \emph{$\mathrm{L}$-algebra} if, for any $u,v,w\in\mathcal{A}$,
    \[
    u\bullet(v\bullet w) = v\bullet(u\bullet w),
    \]
    which means $\Ll^\bullet_u\circ \Ll^\bullet_v=\Ll^\bullet_v\circ \Ll^\bullet_u$.

    \item $(\mathcal{A},\bullet)$ is an \emph{$\mathrm{R}$-algebra} if, for any $u,v,w\in\mathcal{A}$,
    \[
    (u\bullet v)\bullet w = (u\bullet w)\bullet v,
    \]
    which means $\Rr^\bullet_w\circ \Rr^\bullet_v=\Rr^\bullet_v\circ \Rr^\bullet_w$.

   \item $(\mathcal{A},\bullet)$ is an \emph{$\mathrm{LR}$-algebra} if it is both an $\mathrm{L}$-algebra and an $\mathrm{R}$-algebra. In this case, $[\cdot,\cdot]_\bullet$ defines a Lie bracket (see \cite[Lemma 1.3]{BDD}). An \emph{$\mathrm{LR}$-structure} on a Lie algebra $\mathfrak{g}$ is a product $\bullet$ such that $(\mathfrak{g}, \bullet)$ is an $\mathrm{LR}$-algebra whose associated Lie algebra $\mathfrak{g}^-$ coincides with $\mathfrak{g}$. These structures arise naturally in studying affine actions on nilpotent Lie groups (see \cite{BDV}).

    \item $(\mathcal{A},\bullet)$ is a \emph{Novikov algebra} if it is both a left-symmetric algebra and an $\mathrm{R}$-algebra.

    \item $(\mathcal{A},\bullet)$ is a \emph{left-Leibniz algebra} if, for all $u,v,w\in\mathcal{A}$,
    \[
    (u\bullet v)\bullet w=u\bullet (v\bullet w)-v\bullet (u\bullet w),
    \]
    which is equivalent to $\Ll^\bullet_{u\bullet v}=[\Ll^\bullet_u,\Ll^\bullet_v]$. 
\end{enumerate}

\ssbegin{Definition}
We say that $(\mathfrak{g}, \br, \omega)$ is a \emph{$T$-symplectic Lie algebra} if $(\mathfrak{g}, \br)$ is a Lie algebra equipped with a nondegenerate and skew-symmetric bilinear form $\omega$ on $\mathfrak{g}$.
\end{Definition}

In \cite{BE1}, the notion of the Levi-Civita product was introduced for anticommutative nonassociative algebras equipped with a nondegenerate skew-symmetric bilinear form. This notion was later extended to anticommutative superalgebras in \cite{BE}. In the present work, we focus exclusively on the Levi-Civita product associated with $T$-symplectic Lie algebras, as recalled in the following proposition.

\ssbegin{Proposition}[\cite{BE1}]\label{pdoduit de Levi-civita}
Let $(\mathfrak{g},[\cdot,\cdot], \om)$ be a $T$-symplectic Lie algebra. Then there exists a unique product $\bullet$ on $\mathfrak{g}$ satisfying
\begin{align}
    u\bullet v - v\bullet u &= [u,v], \label{torsion} \\
    \omega(u\bullet v,w) - \omega(v,u\bullet w) &= 0, \label{compatible}
\end{align}
for all $u,v,w \in \mathfrak{g}$. More precisely, this product is defined by
\begin{equation}
\omega(u\bullet v,w) = \frac{1}{2} \Big( \omega([u,v],w) - \omega([v,w],u) + \omega([w,u],v) \Big). \label{koszul}
\end{equation}
The product $\bullet$ is called the \emph{Levi-Civita product} associated with the $T$-symplectic Lie algebra $(\mathfrak{g},[\cdot,\cdot],\omega)$.
\end{Proposition}

According to relations \eqref{torsion} and \eqref{compatible}, for any $u \in \mathfrak{g}$, the left multiplication operator $\Ll_u^\bullet$ is symmetric with respect to $\omega$, and the adjoint operator satisfies 
$
\mathrm{ad}_u = \Ll_u^\bullet - \Rr_u^\bullet,$ where $\mathrm{ad}_u: \mathfrak{g} \to \mathfrak{g}$ is defined by $\mathrm{ad}_u(v) = [u,v]$ for all $v\in \mathfrak{g}$.

The \emph{curvature operator} on $(\mathfrak{g}, \omega)$ is defined by
\[
\mathcal{R}(u,v) := \Ll^\bullet_{[u,v]} - [\Ll^\bullet_u, \Ll^\bullet_v], \quad \text{for all } u,v \in \mathfrak{g}.
\]

If this operator vanishes identically, then $(\mathfrak{g},[\cdot,\cdot], \omega)$ is called a \emph{flat $T$-symplectic Lie algebra}. 

Clearly, the curvature $\mathcal{R}$ vanishes identically if and only if $(\mathfrak{g}, \bullet)$ is a left-symmetric algebra.

The problem we study can be approached in two equivalent ways.

\medskip
\noindent \textbf{Approach 1.}
Let $(\mathfrak{g},[\cdot,\cdot],\omega)$ be a $T$-symplectic Lie algebra and let $\bullet$  denote its Levi-Civita
product.  We investigate conditions under which $(\mathfrak{g},[\cdot,\cdot],\omega)$ is flat. The flatness of this algebra implies that $(\mathfrak{g},\bullet)$ is left-symmetric, where the nondegenerate skew-symmetric bilinear form $\omega$ satisfies $\omega(u\bullet v, w) = \omega(v, u\bullet w)$ for all $u,v,w\in \mathfrak{g}$.

\medskip
\noindent \textbf{Approach 2 (Conversely).}
Let $(\mathfrak{g},\bullet)$ be a left-symmetric algebra equipped with a nondegenerate skew-symmetric bilinear form $\omega$ such that
\[
\omega(u\bullet v,w) - \omega(v, u\bullet w) = 0, \quad \text{ for all }u,v,w\in\mathfrak{g}.
\]
Then, $(\mathfrak{g^{-}},\omega)$ is a flat $T$-symplectic Lie algebra with the Lie bracket defined by $[u,v]_\bullet := u\bullet v - v\bullet u$. Furthermore, by the uniqueness of the Levi-Civita product, $\bullet$ coincides with the Levi-Civita product associated with $(\mathfrak{g^{-}},\omega)$.

\medskip
\noindent 
We conclude that, the study of flat $T$-symplectic Lie algebras is equivalent to the study of left-symmetric algebras equipped with a nondegenerate skew-symmetric bilinear form satisfying
\[
\omega(u\bullet v , w) - \omega(v , u\bullet w) = 0, \qquad \text{ for all }u,v,w\in\mathfrak{g}.
\]
The problem we study can be approached in two equivalent ways.

This motivates the following definition.

\ssbegin{Definition}\label{Defpr}
A \emph{pre-symplectic left-symmetric algebra} is a triple
$(\mathcal A, \bullet, \om)$, where
$(\mathcal A, \bullet)$ is a left-symmetric algebra  and
$\om$ is a nondegenerate skew-symmetric bilinear form on
$\mathcal A$ satisfying 
\[
\om( u\bullet v , w) - \om( v , u\bullet w) =0,
\qquad \text{ for all } u,v,w\in\A .
\]
\end{Definition}

\ssbegin{Proposition}\label{condition plate}
Let $(\mathcal{A},\bullet)$ be an algebra equipped with a nondegenerate skew-symmetric bilinear form $\omega$ such that the left multiplication operators are symmetric with respect to $\omega$. Then, $(\mathcal{A},\bullet)$ is a left-symmetric algebra if and only if
\begin{equation}\label{plat}
\Ll^\bullet_{[u,v]_\bullet}=0 \quad \text{and} \quad \Ll^\bullet_u\circ \Ll^\bullet_v=\Ll^\bullet_v\circ \Ll^\bullet_u, \qquad \text{for all } u,v\in \mathcal{A}.
\end{equation}
\end{Proposition}

\begin{proof}
Assume that $(\mathcal{A},\bullet)$ is a left-symmetric algebra. Thus, for all $u,v\in \mathcal{A}$, we have $$\Ll^\bullet_{[u,v]_\bullet}=[\Ll^\bullet_u, \Ll^\bullet_v].$$ Since the operator $\Ll^\bullet_{[u,v]_\bullet}$ is symmetric with respect to $\omega$, whereas the commutator $[\Ll^\bullet_u, \Ll^\bullet_v]$ is skew-symmetric with respect to $\omega$, both sides must vanish identically. It follows that
\[
\Ll^\bullet_{[u,v]_\bullet} = 0 \quad \text{and} \quad [\Ll^\bullet_u, \Ll^\bullet_v] = 0, \qquad \text{for all } u,v\in \mathcal{A}.
\]
Hence, the identities \eqref{plat} hold.

Conversely,  the result follows immediately.
\end{proof}

\ssbegin{Remark}
It is clear from Proposition \ref{condition plate} that, under its assumptions, $(\mathcal{A}, \bullet)$ is left-symmetric if and only if it  is left-symmetric and satisfies the $\mathrm{L}$-algebra conditions. In this case, $(\mathcal{A}, \bullet)$ will be called a left-symmetric $\mathrm{L}$-algebra.
\end{Remark}

Let $(\mathcal{A},\bullet,\omega)$ be a pre-symplectic left-symmetric algebra. Denote by
\[
[\mathcal{A},\mathcal{A}] :=\spa \{[u,v]_\bullet \mid u,v\in\mathcal{A}\}
\]
the commutator ideal of the associated Lie algebra $\mathcal{A}^-$, and by
\[
\mathcal{A}\bullet\mathcal{A} :=\spa \{u\bullet v \mid u,v\in\mathcal{A}\}
\]
the two-sided ideal of $(\mathcal{A},\bullet)$. We obviously have
\begin{equation}\label{ortogonal}
(\mathcal{A}\bullet\mathcal{A})^\perp =\{u\in\mathcal{A} \mid \Rr^\bullet_u=0\} \qquad \text{and} \qquad [\mathcal{A},\mathcal{A}]^\perp = \{u\in\mathcal{A} \mid (\Rr_u^\bullet)^*=-\Rr^\bullet_u\}.
\end{equation}

\ssbegin{Proposition}\label{pr 6.7}
 Let $(\A, \bullet, \om)$ be a pre-symplectic left-symmetric algebra. Then:
 \begin{enumerate}
 \item[$(i)$] $Z(\A^-) \cap[\A, \A]$ is contained in $[\A, \A]^\bot$. Consequently, $Z(\A^-) \cap[\A, \A]$ is a completely isotropic two-sided ideal of $(\A, \bullet)$.
\item[$(ii)$] $[\A, \A]$ is a two-sided ideal of $(\A, \bullet)$.
\item[$(iii)$] $[\A, \A]^\bot$ is an anti-commutative left ideal of $(\A, \bullet)$. Moreover, $u\bullet (v\bullet w) =(u\bullet v)\bullet w=0$ for all $u,v,w\in [\A, \A]^\bot$.
\item[$(iv)$] $[\A,\A]^\bot\cap[\A,\A]$ and $[\A,\A]+[\A,\A]^\bot$ are  two-sided ideal of $(\A,\bullet)$.

\end{enumerate}
\end{Proposition}

 \begin{proof}
$(i)$. According to Proposition \ref{condition plate}, we have
$
[u,v]_\bullet \bullet w =0,$ for all $ u,v,w\in \A.$ Then, by  \eqref{ortogonal}, it follows that
$Z(\A^-) \cap[\A,\A] \subseteq (\A\bullet \A)^\perp.$  Moreover, tanks to  $[\A,\A]\subset (\A\bullet \A)$, one has $
Z(\A^-) \cap[\A,\A] \subseteq (\A\bullet \A)^\perp \cap (\A\bullet \A).$ Therefore, $Z(\A^-) \cap[\A, \A]$ is a completely isotropic two-sided ideal of $(\A, \bullet)$.

 $(ii)$. According to Proposition \ref{condition plate}, we have $[u,v]_\bullet\bullet w=0$ for all $u,v,w\in \A$ and $w\bullet [u,v]_\bullet=[w, [u,v]_\bullet]_\bullet\in [\A,\A]$, we get that $[\A,\A]$ is a two-sided ideal of $(\A, \bullet)$.

$(iii)$.  Let $u\in \A$, $v\in[\A,\A]^\bot$ and $w\in [\A,\A]$ we have $\om (u\bullet v, w)\overset{\eqref{ortogonal}}{=}\om(u, w\bullet v)=0$, then $[\A,\A]^\bot$ is a left ideal of $(\A, \bullet)$. On the other hand, let $u,v\in[\A,\A]^\bot$. For all $w\in\A$ we have 
$$\om(u\bullet v,w)\overset{\eqref{ortogonal}}{=}-\om( u, w\bullet v)= -\om( w\bullet u,  v)\overset{\eqref{ortogonal}}{=}\om( w,  v \bullet u)=- \om(   v \bullet u, w).$$
As $\om$ is nondegenerate, it follows that $u\bullet v=-v\bullet u.$ Let $u, v,w\in [\A, \A]^\bot$. Since $(\A, \bullet)$ is  a $\Ll$-algebra and $[\A, \A]^\bot$ is an anti-commutative subalgebra of $(\A, \bullet)$, then $(u\bullet v-v\bullet u)\bullet w=0$ and hence $2(u\bullet v)\bullet w=0$  and $(u\bullet v)\bullet w=w\bullet (u\bullet v) =0.$

 (iv) Since $[\mathcal{A},\mathcal{A}]$ and $[\mathcal{A},\mathcal{A}]^\perp$ are left ideals of $(\mathcal{A}, \bullet)$, it follows that $[\mathcal{A},\mathcal{A}]^\perp \cap [\mathcal{A},\mathcal{A}]$ is also a left ideal of $(\mathcal{A}, \bullet)$. Moreover, since 
$
([\mathcal{A},\mathcal{A}]^\perp \cap [\mathcal{A},\mathcal{A}]) \bullet \mathcal{A} = \{0\},
$
we conclude that $[\mathcal{A},\mathcal{A}]^\perp \cap [\mathcal{A},\mathcal{A}]$ is a right ideal of $(\mathcal{A}, \bullet)$.

On the other hand, let $u \in [\mathcal{A},\mathcal{A}]^\perp \cap [\mathcal{A},\mathcal{A}]$, $v \in \mathcal{A}$, $w_1 \in [\mathcal{A},\mathcal{A}]$, and $w_2 \in [\mathcal{A},\mathcal{A}]^\perp$. By $(ii)$ and $(iii)$, we obtain
\[
\omega\big((w_1+w_2)\bullet v, u\big) = \omega(w_2\bullet v, u) = \omega(v, w_2\bullet u) = -\omega(v, u\bullet w_2) = 0.
\]
Since $\om$ is nondegenerate, it follows that, $[\mathcal{A},\mathcal{A}] + [\mathcal{A},\mathcal{A}]^\perp$ is a right ideal of $(\mathcal{A}, \bullet)$.

In a similar way, we obtain
\[
\omega\big(v\bullet(w_1+w_2), u\big) = \omega(w_1+w_2, v\bullet u) = 0,
\]
which shows that $[\mathcal{A},\mathcal{A}] + [\mathcal{A},\mathcal{A}]^\perp$ is also a left ideal of $(\A, \bullet)$.
\end{proof}

\ssbegin{Lemma}\label{Lemma 1}
	Let $(\A,\bullet)$ be an algebra equipped with a nondegenerate skew-symmetric bilinear form $\omega$ such that the left multiplication operators are symmetric with respect to $\omega$.
  If
\(
[\Rr^\bullet_u,\Rr^\bullet_v]=0\)
 for all $u,v\in \mathcal A,$
then
\[
(u\bullet v)\bullet w = 0,
\quad \text{for all } u,v,w\in \mathcal A.
\]
Equivalently,
\[
\Ll^\bullet_{u\bullet v} = 0
\quad\text{for all } u,v\in \mathcal A,
\quad\text{or equivalently}\quad
\Rr^\bullet_u \circ \Rr^\bullet_v = 0
\quad\text{for all } u,v\in \mathcal A.
\]
\end{Lemma}
\begin{proof}
	 For any $u,v,w,z\in \A$, we have 
	\begin{eqnarray*}
		\om( \Rr^\bullet_u\circ\Rr^\bullet_v(w), z ) &=& \om( (w\bullet v)\bullet
		u,z)= \om( u,(w\bullet v)\bullet z ),\\
		&=&\om( u,(w\bullet z)\bullet v )=\om (w\bullet z)\bullet u,v ),
		\\&=&\om( (w\bullet u)\bullet z,v) =\om( z,(w\bullet u)\bullet v ),\\ &=&-\om( \Rr^\bullet_v\circ\Rr^\bullet_u(w), z ).
	\end{eqnarray*}
	As $\om$  is nondegenerate,  we deduce that $\Rr^\bullet_u\circ\Rr^\bullet_v=0$. 
	\end{proof}

\ssbegin{Proposition}\label{Condition Novikov}
	Let $(\A,\bullet,\omega)$ be an algebra equipped with a nondegenerate skew-symmetric bilinear form $\omega$ such that the left multiplication operators are symmetric with respect to $\omega$. Then the following assertions are equivalent:
	\begin{enumerate}\item[$(i)$] 
	 $(\A, \bullet)$ is a  Novikov  algebra.
	 \item[$(ii)$] $(\A, \bullet)$ is an $\mathrm{LR}$-algebra.
 \item[$(iii)$] $(\A,\bullet, \om)$ is a  left Leibniz $\Ll$-algebra.
  \item[$(iv)$]  For any $u,v\in\A$,  
	\begin{equation}
		\Ll^\bullet_{u\bullet v }=[\Ll^\bullet_u,\Ll^\bullet_v]=0.   \label{kly0}
	\end{equation}
	\end{enumerate}
	
\end{Proposition}
\begin{proof}  It is an immediate consequence of Lemma \ref{Lemma 1}.
\end{proof}

\sssbegin{Proposition}\label{[g,g] abelian}
	Let $(\A, \bullet, \om)$ be a pre-symplectic left-symmetric algebra. Then 
	\begin{enumerate}
		\item[$(i)$]  The restriction of $\bullet$ to $[\A,\A]$ is trivial; in particular, $[\A,\A]$ is an abelian ideal of $\A^-$.
		\item[$(ii)$]  If $[\A, \A]$ is nondegenerate, then $ [\A, \A]=\A\bullet\A$ and  $ [\A, \A]^{\perp}$ is an abelian Lie subalgebra of $\A^-$.
	\end{enumerate}

\end{Proposition}
\begin{proof}

$(i)$. This is an immediate consequence of \eqref{plat}. 

 $(ii)$. Suppose that $[\A,\A]$ is nondegenerate, i.e, $\A=[\A,\A]\oplus[\A,\A]^\perp$. for any $u\in[\A, \A]^\perp$, and by virtue of \eqref{plat}, we have $\Rr^\bullet_u([\A,\A])=\{0\}$.  Since $\Rr^\bullet_u$ is skew-symmetric, for any  $u\in[\A, \A]^\bot$, we get $\Rr^\bullet_u([\A,\A])^\perp)\subset [\A,\A]^\perp$. Having this remark in mind, let us show our assertion. The inclusion $(\A\bullet\A)^\perp\subset [\A, \A]^\perp$ is obvious. 

From \eqref{ortogonal}, for any $u \in (\A\bullet \A)^\bot$ we have $\Rr^\bullet_u=0$ and therefore the restriction of $\bullet$ to $(\A\bullet\A)^\perp$ is trivial. On the other hand, for any $u\in[\A, \A]^\perp$, and by virtue of \eqref{plat}, we have $\Rr^\bullet_u([\A,\A])=\Rr^\bullet_u(\A\bullet\A)=\{0\}$.  

Since $\om([u,v],w)=0$, for any $u,v,w\in[\A,\A]^\perp$, and the fact that $\Rr^\bullet_v$ skew-symmetric and $\Ll^\bullet_v$ is symmetric, it follows that 
		\[ \om( u\bullet v, w) =\om( v\bullet u, w)=\om( u,v\bullet w)=\om( u, w\bullet v)
		\overset{\eqref{ortogonal}}{=}-\om( u\bullet v,w). \]
		This implies that $\om( u\bullet v, w) = 0$, hence $\Rr^\bullet_u([\A,\A]^\perp)=\{0\}$, and therefore $\Rr^\bullet_u=0$. Thus, $(\A\bullet\A)^\perp=[\A, \A]^\perp$ by \eqref{ortogonal}, which implies that $\A\bullet\A=[\A, \A].$ Finally, the restriction of $\bullet$ to $(\A\bullet\A)^\perp$ is trivial, and thus $[\A, \A]^\perp$ is an abelian Lie subalgebra.       

\end{proof}

We now introduce an important class of pre-symplectic  left-symmetric algebras. In \cite[Theorem 1.5]{M}, Milnor showed that the Lie algebra $\mathfrak g$ of a flat Riemannian (or Euclidean) Lie algebra decomposes as a semidirect product of an abelian ideal $\mathfrak u$ and an abelian subalgebra $\mathcal B$. Furthermore, for any $u \in \mathcal B$, the adjoint map $\operatorname{ad}_u$ is antisymmetric. 

With respect to this  decomposition  $\mathfrak{g}=\mathfrak{u}\oplus\mathcal{B}$, the
Levi-Civita product $\bullet$ satisfies
$$
\Ll^\bullet_u =
\begin{cases}
0, & \text{if } u\in \mathfrak{u}, \\[2mm]
\ad_u, & \text{if } u\in \mathcal{B}.
\end{cases}
$$
Here $\ad_u=\Ll^\bullet_u-\Rr^\bullet_u$, for all $u\in\g$. This is
equivalent to the fact that $\g=\mathfrak{u}\oplus \mathfrak{u}^\bot$, with 
\[
\g\bullet\g \subseteq \mathfrak{u}
\quad \text{and} \quad
\Ll^\bullet_u = 0,
\ \text{for all } u\in\mathfrak{u}.
\]

We enlarge the context and introduce the following definition.
	
	\ssbegin{Definition}\label{defmilnor} 
A Milnor pre-symplectic algebra is a pre-symplectic algebra $(\mathcal{A},\bullet,\omega)$ such that $(\mathcal{A},\bullet)$ is a Lie-admissible algebra and there exists a \emph{nondegenerate} such that $\mathcal{A}\bullet\mathcal{A}\subset I$ and $\Ll^\bullet_u=0$ for all $u\in I$. In this case, we also say that $(\mathcal{A}, \bullet, \omega)$ is a Milnor pre-symplectic algebra by means of $I$.
\end{Definition}
     The following proposition justifies this terminology and shows that Milnor  algebras form a subclass of pre-symplectic Novikov  algebras.

\ssbegin{Proposition}\label{milnor} Let $(\A,\bullet,\om)$ be a Milnor pre-symplectic algebra by means of $I$. Then:
		\begin{enumerate}
			\item[$(i)$] both $I$ and $I^\perp$ are  trivial subalgebras of $(\A,\bullet)$. Thus $I$ is an abelian ideal of $\A^-$ and $I^\perp$ is an abelian Lie subalgebra, 
			\item[$(ii)$] $\A=I\oplus I^\perp$ and, for any $u\in I$,
			\[ \Ll^\bullet_u=\begin{cases}
				0\quad\mbox{if}\quad u\in I,\\
				\ad_u\quad\mbox{if}\quad u\in I^\perp,
			\end{cases} \]
		\item[$(iii)$]  the center $Z(\A^-)$ of $(\A,\br)$ satisfies $Z(\A^-)=\{u\in\A,\Ll^\bullet_u=\Rr^\bullet_u=0\}$.
		\end{enumerate} 
	In particular, $(\A,\bullet,\om)$ is a pre-symplectic Novikov algebra.
\end{Proposition}
\begin{proof}
			 (i).  It is a consequence of the fact that $\Ll^\bullet_u=0$, for any $u\in I$, and the fact that $I^\perp\subset(\A\bullet\A)^\perp$ which implies $\Rr^\bullet_u=0$, for any $u\in I^\perp$,  by virtue of \eqref{ortogonal}.
		 $(ii)$ The only thing to point out is that $I^\perp\subset[\A,\A]^\perp$ and so, for any $u\in I^\perp$, $\Rr^\bullet_u=0$ and $\ad_u=\Ll^\bullet_u$.

 (iii). Let $v=v_1+v_2\in Z(\A^-)$ where 
		$v_1\in I$ and $v_2\in I^\perp$. Since both $I$ and $I^\perp$ are abelian Lie subalgebras, we deduce that $v_1,v_2\in Z(\A^-)$. 
From $0 = \ad_{v_1} = \Ll^\bullet_{v_1} - \Rr^\bullet_{v_1}$ and 
$0 = \ad_{v_2} = \Ll^\bullet_{v_2} - \Rr^\bullet_{v_2}$, we deduce that 
$\Ll^\bullet_{v_1} = \Rr^\bullet_{v_1}$ and $\Ll^\bullet_{v_2} = \Rr^\bullet_{v_2}$. 

Moreover, since $v_1 \in I$, we have 
$\Ll^\bullet_{v_1} = \Rr^\bullet_{v_1} = 0$, and since $v_2 \in I^\perp$, we also have 
$\Ll^\bullet_{v_2} = \Rr^\bullet_{v_2} = 0$. 

Therefore, we obtain
\[
\Ll^\bullet_v = \Ll^\bullet_{v_1} + \Ll^\bullet_{v_2} = 0, \qquad
\Rr^\bullet_v = \Rr^\bullet_{v_1} + \Rr^\bullet_{v_2} = 0.
\]

The fact that $(\A,\bullet,\om)$ is pre-symplectic  Novikov algebra is an immediate consequence of $(ii)$ and the fact that $\br_\bullet$ is a Lie bracket.  Indeed, since 
$\A \bullet \A \subseteq I$, we have $\Ll^\bullet_{u\bullet v} =0$, for all $u,v \in \A$, and 
$[\Ll^\bullet_u, \Ll^\bullet_w] \overset{(ii)}{=} 0$ for all $u \in I$ and $w \in \A$. Moreover, since $I^\bot$ is an abelian Lie subalgebra,   for $w,z \in I^\perp$, we have 
\[
0 = \ad_{[w,z]} = [\ad_w,\ad_z] = [\Ll^\bullet_w-\Rr^\bullet_w, \Ll^\bullet_z-\Rr^\bullet_z] \overset{(ii)}{=} [\Ll^\bullet_w, \Ll^\bullet_z].
\]
Hence, $(\A, \bullet)$ satisfies the identities \eqref{kly0}, and thus 
$(\A, \bullet)$ is a Novikov algebra.
\end{proof}
In the following, we will give a relationship between some classes of pre-symplectic left-symmetric algebras. 

\ssbegin{Theorem} \label{ ideal non degenere}
	Let $(\A, \bullet, \om)$ be a  pre-symplectic left-symmetric algebra such that $[\A,\A]$ is  nondegenerate. Then $(\A,\bullet,\om)$ is a Milnor pre-symplectic  algebra by means of $\A\bullet\A=[\A,\A]$ and the conclusions of Proposition \ref{milnor} hold.
\end{Theorem}
\begin{proof} 
Since $[\A,\A]$ is nondegenerate, by Proposition~\ref{[g,g] abelian} we have $ \A\bullet\A=[\A,\A]. $ Then, by Proposition~\ref{condition plate}, we obtain $\Ll^\bullet_u=0$ for all $u\in [\A, \A]$. Hence $(\A,\bullet,\om)$ is a Milnor pre-symplectic algebra with 
$\A\bullet\A=[\A,\A]$, and the conclusions of Proposition~\ref{milnor} hold.
\end{proof}

\ssbegin{Corollary}\label{NV}
Let $(\A, \bullet, \om)$ be a pre-symplectic left-symmetric algebra such that $[\A,\A]$ is nondegenerate. Then, $(\A, \bullet)$ is pre-symplectic Novikov algebra. 
\end{Corollary}

\begin{proof}
According to Theorem \ref{ ideal non degenere},  Hence $(\A,\bullet,\om)$ is a Milnor pre-symplectic algebra with 
$\A\bullet\A=[\A,\A]$, and by Proposition~\ref{milnor}, $(\A, \bullet)$ is pre-symplectic Novikov algebra.
\end{proof}

\ssbegin{Corollary}\label{nilpotent}
Let $(\A, \bullet, \om)$ be a pre-symplectic left-symmetric algebra such that $[\A, \A]$ is nondegenerate. Then, $\A^-$ is nilpotent if and only if  $\bullet$ is trivial. 
\end{Corollary}

\begin{proof}
Assume that $(\A, \bullet)$ is non-trivial. According to Theorem~\ref{  ideal non degenere}, 
$(\A,\bullet,\om)$ is a Milnor pre-symplectic algebra by means of  $\A\bullet\A=[\A,\A]$. Since $\A^-$ is nilpotent, we have
$
Z(\A^-)\cap [\A,\A]\neq 0.$ By part (iii) of Proposition~\ref{milnor} and Eq.~\eqref{ortogonal}, we obtain
$
Z(\A^-)\subseteq [\A,\A]^{\bot}.
$
Hence \[
\{0\}\neq Z(\A^-)\cap [\A,\A]\subseteq [\A,\A]^{\bot}\cap [\A,\A],
\]
which is a contradiction since $[\A,\A]$ is nondegenerate. 

Conversely, the statement is obvious.
\end{proof}

\ssbegin{Example}\label{ex4}
Let $(\A, \bullet, \omega)$ be a  pre-symplectic non-commutative left-symmetric algebra of dimension $4$ such that $[\A,\A]$ is nondegenerate. Let $\{e_1,e_2\}$ be a symplectic basis of $[\A,\A]$ and let $\{f_1,f_2\}$ be a symplectic basis of $[\A,\A]^\perp$. By Theorem~\ref{ ideal non degenere}, we have
\[
f_1\bullet e_1=xe_1,\qquad
f_1\bullet e_2=xe_2,\qquad
f_2\bullet e_1=ye_1,\qquad
f_2\bullet e_2=ye_2,
\]
where $x,y\in\mathbb{K}$. Since $(\A, \bullet)$ is non-commutative, we have $(x,y)\neq (0,0)$. Define
\[
(y_1,y_2,x_1,x_2)
=
\left(
\frac{1}{x^2+y^2}(x f_1+y f_2),
-y f_1+x f_2,
e_1,
e_2
\right).
\]

Then $(y_1,y_2,x_1,x_2)$ is a symplectic basis of $(\A,\omega)$ and the non-zero products are given by
\[
y_1\bullet x_1=x_1,
\qquad
y_1\bullet x_2=x_2.
\]

Moreover,
\[
\omega
=
y_1^*\wedge y_2^*
+
x_1^*\wedge x_2^*.
\]

Therefore, $(\mathcal{A},\bullet,\omega)$ is isomorphic to $\big(\operatorname{span}\{ y_1,y_2, x_1,x_2\}, \bullet, \omega\big)$.
\end{Example}

\subsection{Representations of pre-symplectic left-symmetric algebras}
Here, we show, as a consequence of the results obtained so far, that every pre-symplectic left-symmetric algebra naturally carries the structure of a left-symmetric $\Ll$-algebra. We then introduce the notion of representation of a left-symmetric $\Ll$-algebra and define the associated product related to pre-symplectic left-symmetric algebra. Furthermore, we establish a characterization of pre-symplectic left-symmetric algebra in terms of bimodule isomorphisms. More precisely, we show that the existence of a suitable bilinear form is equivalent to the isomorphism between certain naturally associated $\A$-bimodules. Motivated by this representation, we construct a new class of pre-symplectic left-symmetric algebras, analogous to the $T^*$-extension for quadratic algebras introduced in \cite{Bordemann}.

\sssbegin{Definition}\label{Def3.1}
Let $(\A,\cdot)$ be a non-associative algebra satisfying some polynomial identities $(P_1),\dots,(P_n)$. Let $V$ be a vector space, and let 
$
l, r : \A \longrightarrow \mathrm{End}(V)
$
be two linear maps. On the direct sum $\A \oplus V$, define a product $\diamond$ by
\begin{equation}\label{eq7}
(u+x)\diamond (v+y) := u \cdot v + l(u)(y) + r(v)(x), \text{ for all $x,y \in \A$ and $u,v \in V$.}
\end{equation}

We say that $(V,l,r)$ is an $\A$-bimodule if $(\A \oplus V,\diamond)$ is an algebra satisfying the polynomial identities $(P_1),\dots,(P_n)$. In this case, the pair $(l,r)$ is called a representation of $\A$ in $V$.
\end{Definition}

\sssbegin{Proposition}\label{Prop3.3}
Let $(\A, \bullet)$ be a left-symmetric $\Ll$-algebra, $V$ be a vector space, and  
$
l, r : \A \longrightarrow \mathrm{End}(V)
$
be two linear maps. Then $(l, r)$ is a representation of $\A$ in $V$ if and only if, for all $u,v \in \A$, the following identities hold: 
\begin{equation*}\label{eq3.13}
\begin{aligned}
\relax l([u, v]_\bullet)=0,\quad [l(u),l(v)]=0, \quad r(u\bullet v)=l(v)\circ r(u),\; r(u)\circ l(v)= r(u)\circ r(v).
\end{aligned}
\end{equation*}
\end{Proposition}

\begin{proof}
By a direct computation.
\end{proof}

\sssbegin{Remark}\label{Prop3.4}
Let $(\A, \bullet)$ be a left-symmetric $\Ll$-algebra, and let 
$
\Ll^\bullet,\Rr^\bullet : \A \longrightarrow \mathrm{End}(\A)
$
be the left and right multiplication operators, respectively. Then $( \Ll^\bullet, \Rr^\bullet)$ is a representation of $\A$ in $\A$, called the \emph{adjoint representation} \textup{(}or \emph{regular representation}\textup{)} of $\A$.
\end{Remark}

Let $(\A, \bullet, \om)$ be a pre-symplectic  left-symmetric  algebra. Consider the bilinear map $\star: \A \times \A \longrightarrow \A$ defined by
\begin{equation}
\om(u\bullet v, w)=\om(u, v \star w), \quad \text{ for all } u,v,w \in \A .
\label{produce associe}\end{equation}

\sssbegin{Proposition}\label{Pr associe}
The product $\star$ defined in Eq. \eqref{produce associe} satisfies the following identities:
\begin{enumerate}
    \item[$(a)$] $u \star v = -v \star u$ for all $u,v\in\mathcal{A}$,
    \item[$(b)$] $u\star (v\star w) = u\bullet (v\star w)$ for all $u,v,w\in\mathcal{A}$,
    \item[$(c)$] $(u\bullet v)\star w = v\star (u \bullet w)$ for all $u,v,w \in \mathcal{A}$.
\end{enumerate}
Moreover, if $(\mathcal{A}, \bullet, \omega)$ is a pre-symplectic Novikov algebra, then identities $(a)$ and $(c)$ still hold, while identity $(b)$ becomes:
\begin{enumerate}
    \item[$(b')$] $u \star(v \star w) = u \bullet(v \star w) = 0$ for all $u,v,w\in\mathcal{A}$.
\end{enumerate}
\end{Proposition}

\begin{proof}
Let $u,v,w,z \in \A$.
     \begin{enumerate}
         \item[$(a)$] We have\begin{align*}
      \om( u, v\star w)=&      \om( u \bullet v, w)= \om( v, u\bullet w)= -\om( u\bullet w, v)=-\om( u, w\star v).
            \end{align*}
Thus, the product $\star$ is skew-symmetric.
\item[$(b)$]  Since $(\A, \bullet)$ is left-symmetric $\Ll$-algebra, then we have 
\begin{align*}
0&=\om(  (u\bullet v)\bullet w-   (v\bullet u)\bullet w, z)= \om( u, v\star (w\star z)) -\om( u, v\bullet (w\star z)) \\&=\om( u, v\star (w\star z)- v\bullet (w\star z)).
\end{align*}
Since $\om$ is nondegenerate, then \[
v\star (w\star z)= v\bullet (w\star z).
\]
\item[$(c)$]  Since $(\A, \bullet)$ is left-symmetric $\Ll$-algebra, then we have 
\begin{align*}
0&=\om(  u\bullet (v\bullet w)-   v\bullet (u\bullet w), z)= \om( u, (v\bullet w)\star z) -\om( u, w\star(v\bullet  z)) \\&=\om( u, (v\bullet w)\star z- w\star(v\bullet  z)).
\end{align*}
Since $\om$ is nondegenerate, then \[
(v\bullet w)\star z=w\star(v\bullet  z).
\]
\end{enumerate}
Now, assume that $(\mathcal{A}, \bullet, \omega)$ is a pre-symplectic Novikov algebra. According to Proposition~\ref{Condition Novikov}, we have
\begin{align*}
0\overset{\eqref{kly0}}{=}\om( (u\bullet v)\bullet w, z)=\om( u\bullet v, w\star z)= \om( u , v\star( w\star z)).
\end{align*}
Since $\omega$ is nondegenerate, we get $v\star (w\star z) = 0$. Consequently, by $(b)$, we obtain
\[
v\star( w\star z) = v\bullet (w\star z) = 0.
\]

\end{proof}

\sssbegin{Proposition}
Let $(\A, \bullet, \om)$ be a pre-symplectic Novikov algebra. Then the associated Lie algebra $(\A, \star)$ is $2$-step nilpotent.

Conversely, let $(\g, \star, \om)$ be a $2$-step nilpotent Lie algebra endowed with a nondegenerate skew-symmetric bilinear form. Define a bilinear product $\bullet$ on $\g$ by
\[
\om( u \bullet v, w ) = \om( u, v \star w ),
\qquad \text{ for all } u,v,w \in \g.
\]
If identity $(c)$ of Proposition~\ref{Pr associe} is satisfied, then $(\g, \bullet, \om)$ is a pre-symplectic Novikov algebra.
\end{Proposition}

\begin{proof}
The first assertion follows directly from parts (a) and (b') of Proposition~\ref{Pr associe}. 

Conversely, assuming that $({\mathfrak g}, \star)$ is a $2$-step nilpotent Lie algebra and that condition $(c)$ holds, one verifies that the Novikov identities are satisfied. The proof is straightforward and follows the same arguments as in Proposition~\ref{Pr associe}.
\end{proof}

Let $(\mathcal{A}, \bullet)$ be an algebra. For a linear map $\phi : \mathcal{A} \to \mathrm{End}(V)$, we define the dual linear map $\phi^* : \mathcal{A} \to \mathrm{End}(V^*)$ by 
\[
\big(\phi^*(u)(f)\big)(x) = -f\big(\phi(u)(x)\big) \quad \text{for all } u \in \mathcal{A}, \; f \in V^*, \; x \in V.
\]

\sssbegin{Proposition}\label{rep1}
Let $(\mathcal{A}, \bullet, \omega)$ be a pre-symplectic left-symmetric algebra. Then the pair $\big(-(\Ll^\bullet)^*, (\Rr^\star)^*\big)$ defines a representation of $\mathcal{A}$ on $\mathcal{A}^*$.
\end{Proposition}

\begin{proof}
According to Proposition \ref{condition plate} $(\A, \bullet)$ is  left-symmetric $\Ll$-algebra. Let $u,v \in \A$ and $f \in \A^*$. By the identities  (b) and (c) of Proposition~\ref{Pr associe}, we obtain
\[
(\Ll_u^\bullet)^* \circ (\Ll_v^\bullet)^*(f)
= - (\Ll_u^\bullet)^*(f \circ \Ll_v^\bullet)
=  f \circ \Ll_v^\bullet \circ \Ll_u^\bullet.
\]
Using again the left-symmetric $\Ll$-algebra identities, this yields
\[
(\Ll_u^\bullet)^* \circ (\Ll_v^\bullet)^*
=  (\Ll_v^\bullet)^* \circ (\Ll_u^\bullet)^*.
\]
Moreover,
\[
(\Ll_{[u, v]_\bullet}^\bullet)^*(f)
= -f \circ \Ll_{[u, v]_\bullet}^\bullet = 0,
\]
by the left-symmetric $\Ll$-algebra condition.

Moreover,  we have
\[
(\Rr_u^\star)^* \circ (\Rr_v^\star)^*(f)
= f \circ \Rr_v^\star \circ \Rr_u^\star \overset{(b)}{=}-f\circ \Ll_v^\bullet\circ \Rr_u^\star= -(\Rr_u^\star)^* \circ (\Ll_v^\bullet)^*(f)
\]
by identity $(b)$.

Similarly, we have
\[
(\Rr_{u\bullet v}^\star)^*(f)
= -f \circ \Rr_{u\bullet v}^\star = -f\circ \Rr_v^\star\circ \Ll_u^\bullet=-(\Ll_u^\bullet)^*\circ(\Rr_v^\star)(f)
\]
by identity $(c)$.

Therefore, $\big(-(\Ll^\bullet)^*, (\Rr^\star)^* \big)$ defines a representation of $\A$ on $\A^*$.
\end{proof}

\sssbegin{Proposition}\label{Prop7.8}
Let $(\mathcal{A},\bullet,\omega)$ be a pre-symplectic left-symmetric algebra. Let $\Ll^\bullet$ and $\Rr^\bullet$ denote the left and right multiplication operators, respectively, and let $\Rr^\star$ denote the right multiplication operator associated with the product $\star$ defined in \eqref{produce associe}. Then there exists an isomorphism $\Phi : \mathcal{A} \longrightarrow \mathcal{A}^{*}$ such that
\[
\Phi(\Ll_u^\bullet(a)) = -(\Ll_u^\bullet)^{*}(\Phi(a)) \quad \text{and} \quad \Phi(\Rr^\bullet_u(a)) = (\Rr_u^\star)^{*}(\Phi(a)) \quad \text{for all } u,a \in \mathcal{A}.
\]
\end{Proposition}

\begin{proof}
Consider the linear map $\Phi : \mathcal{A} \longrightarrow \mathcal{A}^{*}$ defined by
$
\Phi(a)(b) := \omega(a, b)$, for all $a,b \in \mathcal{A}.$ Since $\omega$ is nondegenerate, $\Phi$ is an isomorphism of vector spaces. 

Let $u,a,b \in \mathcal{A}$. Using the symmetry of the left multiplication operators with respect to $\omega$, we obtain
\[
\Phi(\Ll_u^\bullet(a))(b) = \omega(u \bullet a, b) = \omega(a, u \bullet b) = \Phi(a)(\Ll_u^\bullet(b)) = -\big((\Ll_u^\bullet)^{*}(\Phi(a))\big)(b).
\]
Thus, we have
\[
\Phi(\Ll_u^\bullet(a)) = -(\Ll_u^\bullet)^{*}(\Phi(a)).
\]
Similarly, for the right multiplication operator, we have
\[
\Phi(\Rr^\bullet_u(a))(b) = \omega(a \bullet u, b) = \omega(a, u \star b) = -\omega(a, b \star u) = -\Phi(a)(\Rr^\star_u(b)) = \big((\Rr_u^\star)^{*}(\Phi(a))\big)(b),
\]
which implies that
\[
\Phi(\Rr^\bullet_u(a)) = (\Rr_u^\star)^{*}(\Phi(a)).
\]
This completes the proof.
\end{proof}

\sssbegin{Theorem}\label{Thm7.11}
Let $(\A,\bullet)$ be a left-symmetric algebra, and let $\Ll^\bullet$ and $\Rr^\bullet$ denote the left and right multiplication operators, respectively. Then there exists a nondegenerate skew-symmetric bilinear form 
$
\om : \A \times \A \longrightarrow \mathbb{K}
$
such that $(\A,\bullet, \om)$ is a pre-symplectic left-symmetric algebra if and only if there exists an isomorphism 
$
\Phi : \A \longrightarrow \A^{*}
$
satisfying \[
\Phi(\Ll_u^\bullet(a)) = -(\Ll_u^\bullet)^{*}(\Phi(a)) \; \text{and} \; \Phi(\Rr^\bullet_u(a)) = (\Rr_u^\star)^{*}(\Phi(a)), 
\]
where the product $\star$  is skew-symmetric. 
\end{Theorem}
\begin{proof}
Assume first that $(\A,\bullet, \om)$ is a pre-symplectic left-symmetric algebra. Then the result follows directly from Proposition~\ref{Prop7.8}.

Conversely, suppose that there exists an isomorphism $\Phi : \A \to \A^{*}$ such that
\[
\Phi(\Ll_u^\bullet(a)) = -(\Ll_u^\bullet)^{*}(\Phi(a)) \; \text{and} \; \Phi(\Rr^\bullet_u(a)) = (\Rr_u^\star)^{*}(\Phi(a)), \text{ for all $u,a \in \A$.}
\]
Consider the bilinear form
$
T : \A \times \A \longrightarrow \mathbb{K}$ defined by $T(u,v) := \Phi(u)(v),$
for all $u,v \in \A$. Since $\Phi$ is an isomorphism, $T$ is a nondegenerate bilinear form.

Moreover, for all  $u,v,w \in \A$, we have
\[
\begin{aligned}
T(u \bullet v, w) &= \Phi(u \bullet v)(w) 
= \Phi(\Ll_u^\bullet(v))(w) = -(\Ll^\bullet_u)^*(\Phi(v))(w)=\Phi(v)(\Ll^\bullet_u(w))=T(v, u\bullet w)
\end{aligned}
\]
Thus, we get that 
\begin{equation}\label{s1}
T(u \bullet v, w) =T(v, u\bullet w).
\end{equation}
Moreover, we have 
\[
\begin{aligned}
T(v \bullet u, w) &= \Phi(v \bullet u)(w) 
= \Phi(\Rr_u^\bullet(v))(w) = (\Rr^\star_u)^*(\Phi(v))(w)\\& =-\Phi(v)(\Rr^\star_u(w))=-T(v, w\star u)=T(v, u\star w)\\
&=- T(v\bullet w, u) \overset{\eqref{s1}}{=}-T(w, v\bullet u)
\end{aligned}
\]
Thus, we obtain  
\begin{equation}\label{s2}
T(u \bullet v, w) =- T(w, u\bullet v).
\end{equation}

Now, we introduce the symmetric and skew-symmetric parts of the bilinear form $T$. Define the bilinear forms $T_s, T_a : \A \times \A \to \mathbb{K}$ by
\[
T_s(u,v) := \frac{1}{2}\Big(T( u,v ) + T (v,u)\Big),\esp
T_a(u,v) := \frac{1}{2}\Big(T( u,v ) - T( v,u )\Big),
\]
for all  $u,v \in \A$. Clearly, $T = T_s + T_a$.

Since $T$ satisfies \eqref{s1}, for all  $u,v,w \in \A$, we obtain
\[
\begin{aligned}
T_a(u \bullet v, w) &= \frac{1}{2}\Big(T( u \bullet v, w ) - T(w, u \bullet v )\Big) \\
&= \frac{1}{2}\Big(T( v, u \bullet w) - T( u\bullet w ,v )\Big) \\ &\overset{\eqref{s1}}{=} \frac{1}{2}\Big(T( v, u \bullet w) - T( u\bullet w ,v )\Big)\\
&= T_a(v, u \bullet w).
\end{aligned}
\]
It follows that $T_a(u \bullet v, w)=T_a(v, u \bullet w)$.

Moreover, since $T$ is  satisfies \eqref{s2}, for all  $u,v,w \in \A$, we obtain
\[
\begin{aligned}
T_s(u \bullet v, w) &= \frac{1}{2}\Big(T( u \bullet v, w ) + T(w, u \bullet v )\Big) \\
&\overset{\eqref{s2}}{=} \frac{1}{2}\Big(  -T(w, u \bullet v )-T( u \bullet v, w )\Big)
\end{aligned}
\]
Therefore,
\[
T_s(u \bullet v, w) = - T_s(u \bullet v, w), \text{
which implies } 
T_s(u \bullet v, w) = 0.
\]
We define the two spaces:
\[
N := \{u \in \A \mid T_s(u,v)=0 \ \text{for all } v \in \A\} \text{ and } W := \{u \in \A \mid T_a(u,v)=0 \ \text{for all } v \in \A\}.
\] 
By the previous argument, we have $\A^2=\A\bullet\A \subseteq N$, and consequently $N$ is a two-sided ideal of $(\A, \bullet)$. Since $T$ is nondegenerate, we have $N \cap W = \{0\}$. Hence, there exists a vector subspace $V \subseteq \A$ such that $
\A = W \oplus V,$ and clearly $N \subseteq V$. Moreover, since $\A^2 \subseteq N \subseteq V$, it follows that $V$ is a two-sided ideal of $(\A, \bullet)$ and that $T_a|_{V \times V}$ is nondegenerate.

Consider a nondegenerate skew-symmetric bilinear form 
$
H : W \times W \longrightarrow \mathbb{K}.
$ We define a bilinear form $\widetilde{H} : \A \times \A \to \mathbb{K}$ by
$
\widetilde{H}|_{W \times W} = H,$ and $ \widetilde{H}(V,\A)=\widetilde{H}(\A,V)=0.
$
Clearly, $\widetilde{H}$ is skew-symmetric. Now, define the bilinear form
\[
\om := T_a + \widetilde{H} : \A \times \A \longrightarrow \mathbb{K}.
\]
Since $T_s$ and $\widetilde{H}$ are skew-symmetric, it follows that $\om$ is skew-symmetric. We will show that it is actually nondegenerate.

Let $u = w + v \in \A$, with $w \in W$ and $v \in V$, and assume that
$
\om( u,z)=0,$  for all $z \in \A.$
Then
$
\widetilde{H}(w,z) + T_a(v,z)=0.$

If we choose $z\in W$, then $T_s(v,z)=0$, hence $\widetilde{H}(w,z)=0$, and thus $H(w,z)=0$. Since $H$ is nondegenerate, it follows that $w=0$.

If we choose $z\in V$, then $\widetilde{H}(w,z)=0$, and therefore $T_a(v,z)=0$. Since $T_a|_{V \times V}$ is nondegenerate, it follows that $v=0$.

Thus $u=0$, and consequently $\om$ is nondegenerate.

Finally, since $\A^2 \subseteq V$, $\widetilde{H}(V,\A)=\widetilde{H}(\A,V)=0$, and $T_s$ satisfie \eqref{s1}, we obtain, for all $u,v,w \in \A$,
\[
\om( u \bullet v, w)  = T_s(u \bullet v, w) = T_s(v, u \bullet w) = \om( v, u\bullet w) .
\]
Therefore, $(\A,\bullet, \om)$ is a pre-symplectic left-symmetric algebra.
\end{proof}

\sssbegin{Definition}
Let $(\A, \cdot)$ be a non-associative algebra, and let $(V_1,  l_1, r_1)$ and $(V_2, l_2, r_2)$ be two $\A$-bimodules. A  linear map $\Phi : V_1 \to V_2$ is called a \emph{morphism of $\A$-bimodules} if, for all $u \in \A$ and $x \in V_1$, the following identities hold:
\[
\Phi\big(l_1(u)(x)\big)
=  l_2(u)\big(\Phi(x)\big)\esp \Phi\big(r_1(u)(x)\big)
=   r_2(u)\big(\Phi(x)\big).
\]
Moreover, $\Phi$ is called an \emph{isomorphism of $\A$-bimodules} if it is bijective.
\end{Definition}

A consequence of the previous Theorem~\ref{Thm7.11} is the following result for $\mathcal{A}$-bimodules.
\sssbegin{Corollary}\label{Cor_bimodules}
Let $(\mathcal{A}, \bullet)$ be a left-symmetric algebra, and let $\Ll^\bullet$ and $\Rr^\bullet$ denote the operators of left and right multiplication, respectively. Then there exists a bilinear form $\omega: \mathcal{A} \times \mathcal{A} \to \mathbb{K}$ such that $(\mathcal{A},\bullet, \omega)$ is a pre-symplectic left-symmetric algebra if and only if the $\mathcal{A}$-bimodules $(\mathcal{A}, \Ll^\bullet, \Rr^\bullet)$ and $(\mathcal{A}^*, -(\Ll^\bullet)^*, (\Rr^\star)^*)$ are isomorphic.
\end{Corollary}

Motivated by the previous results, we now present a new construction of pre-symplectic  left-symmetric algebras.

\sssbegin{Theorem}\label{construction}
Let $(\mathcal{A},\bullet)$ be a left-symmetric $\mathrm{L}$-algebra. Assume that a bilinear map $\star : \mathcal{A} \times \mathcal{A} \to \mathcal{A}$ satisfies identities $(a)$, $(b)$, and $(c)$ of Proposition~\ref{Pr associe}. Then the vector space $T^*(\mathcal{A}) = \mathcal{A} \oplus \mathcal{A}^*$, endowed with the product
\[
(u+f)\diamond (v+g) = u \bullet v - (\Ll_u^\bullet)^*(g) + (\Rr_v^\star)^*(f),
\]
for all $u,v\in \mathcal{A}$ and $f,g\in \mathcal{A}^*$, is a left-symmetric $\mathrm{L}$-algebra.

Moreover, the bilinear form $\omega_c : T^*(\mathcal{A}) \times T^*(\mathcal{A}) \to \mathbb{K}$ defined by
\[
\omega_c(u+f, v+g) = f(v) - g(u)
\]
is nondegenerate and skew-symmetric, and the left multiplication operators associated with $\diamond$ are symmetric with respect to $\omega_c$. Consequently, the triple $(T^*(\mathcal{A}), \diamond, \omega_c)$ is a pre-symplectic left-symmetric algebra, called the $T^*$-extension of $(\mathcal{A}, \bullet)$

In this case, we have
\[
\omega_c\big((u+f)\diamond (v+g), w+h\big) = \omega_c\big(u+f, (v+g)\triangleright (w+h)\big),
\]
where
\[
(v+g)\triangleright (w+h) = v\star w - g\circ \Rr_w^\bullet + h\circ\Rr_v^\bullet \quad \text{for all } v,w\in \mathcal{A} \text{ and } g,h\in \mathcal{A}^*.
\]
\end{Theorem}

\begin{proof}
By Proposition~\ref{rep1}, the pair $\big(-(\Ll^\bullet)^*, (\Rr^\star)^*\big)$ defines a representation of $\mathcal{A}$ on $\mathcal{A}^*$. Therefore, by the semidirect product construction, $(T^*(\mathcal{A}), \diamond)$ is a left-symmetric algebra.

By definition, the bilinear form $\omega_c$ is nondegenerate and skew-symmetric. Let $u,v,w\in \mathcal{A}$ and $f,g,h\in \mathcal{A}^*$. Then, we have
\begin{align*}
\om_c( (u+f)\diamond (v+g), w+h )
&= \om_c( u\bullet v - (\Ll_u^\bullet)^*(g) +(\Rr_v^\star)^*(f),\, w+h ) \\
&= g(u \bullet w) -  f(w \star v) -  h(u \bullet v) \\
&=  g(u \bullet w) -  h\circ \Ll_u^\bullet (v) +f\circ \Rr^\star_w (v)   \\
&= \om_c(v+g, u\bullet w+h\circ \Ll_u^\bullet-f\circ \Rr^\star_w)\\&= \om_c(v+g, (u+f)\diamond (w+h)).
\end{align*}
Therefore, the left multiplication operators associated with $\diamond$ are symmetric with respect to $\omega_c$, which implies that $(T^*(\mathcal{A}), \diamond, \omega_c)$ is a pre-symplectic left-symmetric algebra.

On the other hand, to determine the operation $\triangleright$, we have:
\begin{align*}
\om_c( u+f, (v+g)\rhd (w+h) )
&= \om_c( (u+f)\diamond (v+g), w+h)\\&= \om_c( u\bullet v - (\Ll_u^\bullet)^*(g) +(\Rr_v^\star)^*(f),\, w+h ) \\
&=  g(u \bullet w) -  f(w \star v) -  h(u \bullet v) 
\\&= \om_c( u+f, v\star w-g\circ \Rr_w^\bullet +h\circ\Rr_v^\bullet).
\end{align*}
Since $\om_c$ is nondegenerate, we conclude that 
$$
(v+g)\rhd (w+h)= v\star w-g\circ \Rr_w^\bullet +h\circ\Rr_v^\bullet.$$
This completes the proof.
\end{proof}

\sssbegin{Corollary}\label{PP}
Let $(\mathcal{A}, \bullet, \omega)$ be a pre-symplectic left-symmetric algebra. Then $(T^*(\mathcal{A}), \diamond, \omega_c)$ is a pre-symplectic left-symmetric algebra, where the product $\star$ is defined in \eqref{produce associe}.
\end{Corollary}

\begin{proof}
Since the product $\star$ defined in \eqref{produce associe} satisfies identities $(a)$, $(b)$, and $(c)$ of Proposition~\ref{Pr associe}, the result follows immediately from Theorem~\ref{construction}.
\end{proof}

\section{Symplectic Lie algebras}
In this section, we study the Levi-Civita product associated with symplectic Lie algebras under the condition that it defines a left-symmetric algebra. We also provide a characterization of symplectic Lie algebras in terms of representations of left-symmetric algebras.

Recall that a \emph{symplectic Lie algebra} is a pair $(\g,\omega)$, where $\g$ is a Lie algebra and $\omega$ is a nondegenerate skew-symmetric bilinear form satisfying
\[
\omega([u,v],w)+\omega([v,w],u)+\omega([w,u],v)=0,
\qquad \text{for all }\, u,v,w\in\g.
\]
This condition means that $\omega$ is a $2$-cocycle in the Lie algebra cohomology.

It is known (see \cite{ch}) that the product $\cdot$ uniquely defined by
\begin{equation}
\omega(u\cdot v,w)=-\omega(v,[u,w]),
\qquad \text{for all }\, u,v,w\in\g,
\label{Produit as1}
\end{equation}
induces a left-symmetric algebra structure on $\g$, satisfying
\[
u\cdot v-v\cdot u=[u,v].
\]
The product $\cdot$ is called the left-symmetric product associated with the symplectic Lie algebra $(\g,\br,\omega)$.

\ssbegin{Proposition}\label{symmetrique a droite}
Let $(\g, \br, \omega)$ be a symplectic Lie algebra. Then, the Levi-Civita product $\bullet$ is given by
\begin{equation}
\omega(u\bullet v,w)=\omega(u,[v,w]),
\qquad \text{for all } u,v,w\in\g.
\label{Produit as}
\end{equation}
In particular, for any $u,v\in\g$, we have
$
u\bullet v = -v\cdot u,
$ where $\cdot$ is the left-symmetric product defined by~\eqref{Produit as1}. Moreover, $(\g,\bullet)$ is a right-symmetric algebra.
\end{Proposition}

\begin{proof}
By Proposition~\ref{pdoduit de Levi-civita}, the Levi-Civita product satisfies the following identity for all $u,v,w\in\g$:
\[
2\omega(u\bullet v,w) = \omega([u,v],w) - \omega([v,w],u) + \omega([w,u],v).
\]
Since $\omega$ is a $2$-cocycle, it satisfies the identity:
\[
\omega([u,v],w) = -\omega([v,w],u) - \omega([w,u],v).
\]
Substituting this identity into the previous equation yields:
\[
\omega(u\bullet v,w) = \omega(u,[v,w]) = -\omega(v\cdot u,w).
\]
Since $\omega$ is nondegenerate, it follows that $u\bullet v = -v\cdot u$ for all $u,v\in\g$.

Finally, using the fact that $(\g,\cdot)$ is a left-symmetric algebra along with the relation $u\bullet v = -v\cdot u$, it follows immediately that $(\g,\bullet)$ is a right-symmetric algebra.
\end{proof}

\ssbegin{Proposition}
Let $(\g,\br,\omega)$ be a symplectic Lie algebra of dimension $2n$, and let $\bullet$ be its Levi-Civita product. If $(\g,\bullet)$ is a left-symmetric algebra, then
$
\dim [\g,\g]\leq n,
$
and the derived ideal $[\g,\g]$ is totally isotropic.
\end{Proposition}
\begin{proof}
By Prop. \ref{condition plate}, we have
$[u,v]\bullet w=0,$
for all $u,v,w\in\g.$ Using \eqref{Produit as}, we obtain
\[
0=\omega([u,v]\bullet w,z)=\omega([u,v],[w,z]),
\qquad \forall\, u,v,w,z\in\g,\]
which implies that
$[\g,\g]\subseteq [\g,\g]^{\perp}.$ Hence, the derived ideal $[\g,\g]$ is totally isotropic. Since $\omega$ is nondegenerate and $\dim\g=2n$, it follows that
$
\dim[\g,\g]\leq n.
$
\end{proof}

\ssbegin{Theorem}
Let $(\g, \br, \om)$ be a symplectic Lie algebra, let $\bullet $ be its Levi-Civita product, and let $\cdot$ be the left-symmetric product associated with the symplectic Lie algebra $(\g, \br, \om)$. Then, the following assertions are equivalent:
\begin{enumerate}
\item $(\g, \bullet)$ is a left-symmetric algebra.
\item $(\g, \bullet)$ is a left-symmetric $\Ll$-algebra.
\item $(\g, \cdot)$ is a Novikov algebra.
\item $(\g, \cdot)$ and $(\g, \bullet)$ are associative algebras.
\end{enumerate}
\end{Theorem}

\begin{proof}
According to Proposition~\ref{condition plate}, (1) is equivalent to (2). Now, assume that (2) holds. For all $u,v,w,z\in \g$, we have:
\begin{align*}
0 &= \omega(u\bullet(v\bullet w) - (u\bullet w)\bullet v, z) \\
  &= \omega(u,[v\bullet w, z]) - \omega(u\bullet w, v\bullet z) \\
  &= \omega(u,[v\bullet w, z]) - \omega(u, [w, v\bullet z]) \\
  &= \omega(u,[v\bullet w, z] - [w, v\bullet z]).
\end{align*}
Since $\omega$ is nondegenerate, we obtain $[v\bullet w, z] = [w, v\bullet z]$. Given that $(\g, \bullet)$ is a right-symmetric algebra by Proposition~\ref{symmetrique a droite}, expanding the brackets yields:
\begin{align*}
0 &= [v\bullet w, z] - [w, v\bullet z] \\
  &= (v\bullet w)\bullet z - z\bullet (v\bullet w) + (v\bullet z)\bullet w - w\bullet (v\bullet z) \\
  &= v\bullet (w\bullet z) - v\bullet(z\bullet w) + (v\bullet z)\bullet w - z\bullet (v\bullet w) + (v\bullet z)\bullet w - w\bullet (v\bullet z) \\
  &= v\bullet (w\bullet z) - v\bullet(z\bullet w) + (v\bullet z)\bullet w - v\bullet (z\bullet w) + (v\bullet z)\bullet w - v\bullet (w\bullet z) \\
  &= -2v\bullet(z\bullet w) + 2(v\bullet z)\bullet w.
\end{align*}
Thus, we get $(v\bullet z)\bullet w = v\bullet(z\bullet w)$, meaning that $(\g, \bullet)$ is an associative algebra. It then follows from Proposition~\ref{symmetrique a droite} (since $u\bullet v = -v\cdot u$, for all $u,v\in \g$) that $(\g, \cdot)$ is also associative, which proves assertion (4).

On the other hand, since $(\g, \bullet)$ is a left-symmetric $\Ll$-algebra and $u\bullet v = -v\cdot u$, for all $u,v\in \g$, it follows from Proposition~\ref{symmetrique a droite} that $(\g, \cdot)$ is a Novikov algebra, which satisfies assertion (3).

Now, if we assume assertion (4), then $(\g, \bullet)$ is associative, which evidently implies it is a left-symmetric algebra, returning us to (1).

Finally, assuming (3), if $(\g, \cdot)$ is a Novikov algebra, then according to \cite[Theorem~1]{Wadia}, $(\g, \cdot)$ is an associative algebra. This completes the proof.
\end{proof}

\ssbegin{Proposition}
Let $(\g, \br, \omega)$ be a symplectic Lie algebra. If its Levi-Civita product $\bullet$ is a Novikov algebra, then $(\g, \br)$ is a $2$-step nilpotent Lie algebra.
\end{Proposition}

\begin{proof}
Assume that $(\g,\bullet)$ is a Novikov algebra. According to Proposition~\ref{Condition Novikov}, we have:
\[
(u\bullet v)\bullet w = 0,
\qquad \text{for all } u,v,w\in\g.
\]
Using equation~\eqref{Produit as}, we obtain for all $u,v,w,z\in\g$:
\[
0 = \omega\big((u\bullet v)\bullet w, z\big)
= \omega(u\bullet v, [w,z])
= \omega(u, [v, [w,z]]).
\]
Since $\omega$ is nondegenerate, it follows that $[v, [w,z]] = 0$ for all $v,w,z\in\g$. Hence, $[\g, [\g, \g]] = 0$, which proves that $(\g, \br)$ is a $2$-step nilpotent Lie algebra.
\end{proof}

Recall that if $(\mathcal{A}, \bullet)$ is a left-symmetric algebra, $V$ is a vector space, and $l, r : \mathcal{A} \longrightarrow \mathrm{End}(V)$ are two linear maps, then $(l, r)$ is a representation of $\mathcal{A}$ in $V$ if and only if, for all $u,v \in \mathcal{A}$, the following identities hold: 
\begin{equation}\label{eq3.18}
l([u, v]_\bullet) = [l(u),l(v)], \qquad r(u\bullet v) - r(v)\circ r(u) = [l(u), r(v)].
\end{equation}

\ssbegin{Proposition}\label{rep2}
Let $(\g, \br, \omega)$ be a symplectic Lie algebra, and let $\cdot$ denote the left-symmetric product associated with $(\g, \br, \omega)$. Then, the pair $\big(\ad_u^*, -(\Rr_u^\cdot)^*\big)$ defines a representation of the left-symmetric algebra $(\g,\cdot)$ on $\g^{*}$.
\end{Proposition}

\begin{proof}
The proof is analogous to that of Proposition~\ref{rep1}.
\end{proof}

\ssbegin{Proposition}\label{Pr19}
Let $(\g, \br, \omega)$ be a symplectic Lie algebra, and let $\cdot$ denote the left-symmetric product associated with $(\g, \br, \omega)$. Let $\Ll^\cdot_u$ and $\Rr^\cdot_u$ denote the left and right multiplication operators, respectively. Then, there exists an isomorphism $\Phi : \g \longrightarrow \g^{*}$ such that
\[
\Phi(\Rr_u^\cdot(a)) = -(\Rr_u^\cdot)^{*}(\Phi(a)) \quad \text{and} \quad \Phi(\Ll^\cdot_u(a)) = \ad_u^*(\Phi(a)),
\]
for all $u,a \in \g$.
\end{Proposition}

\begin{proof}
The proof is analogous to that of Proposition~\ref{Prop7.8}.
\end{proof}

\ssbegin{Theorem}\label{Thm20}
Let $(\mathcal{A},\cdot)$ be a left-symmetric algebra, and let $\Ll^\cdot$ and $\Rr^\cdot$ denote the left and right multiplication operators, respectively. Let $\ad_u$ denote the adjoint operator of the Lie algebra associated with $(\mathcal{A},\cdot)$. Then, there exists a nondegenerate skew-symmetric bilinear form
$\omega : \mathcal{A} \times \mathcal{A} \longrightarrow \mathbb{K}$
such that $(\mathcal{A}^-, \omega)$ is a symplectic Lie algebra if and only if there exists an isomorphism 
$\Phi : \mathcal{A} \longrightarrow \mathcal{A}^{*}$
satisfying:
\[
\Phi(\Rr_u^\cdot(a)) = -(\Rr_u^\cdot)^{*}(\Phi(a)) \quad \text{and} \quad \Phi(\Ll^\cdot_u(a)) = \ad_u^{*}(\Phi(a)).
\]
\end{Theorem}

\begin{proof}
Assume first that $(\mathcal{A}^-, \omega)$ is a symplectic Lie algebra. Then, the result follows directly from Proposition~\ref{Pr19}.

Conversely, suppose there exists an isomorphism $\Phi : \mathcal{A} \to \mathcal{A}^{*}$ satisfying the given relations for all $u,a \in \mathcal{A}$. Consider the bilinear form $T : \mathcal{A} \times \mathcal{A} \longrightarrow \mathbb{K}$ defined by $T(u,v) := \Phi(u)(v)$ for all $u,v \in \mathcal{A}$. Since $\Phi$ is an isomorphism, $T$ is nondegenerate.

Moreover, for all $u,v,w \in \mathcal{A}$, we have:
\[
\begin{aligned}
T(v\cdot u, w) &= \Phi(v \cdot u)(w) = \Phi(\Rr_u^\cdot(v))(w) \\
&= -(\Rr^\cdot_u)^*(\Phi(v))(w) = \Phi(v)(\Rr^\cdot_u(w)) = T(v, w\cdot u).
\end{aligned}
\]
Thus, we obtain:
\begin{equation}\label{N1}
T(v \cdot u, w) = T(v, w\cdot u).
\end{equation}
Similarly, we have:
\[
\begin{aligned}
T(u \cdot v, w) &= \Phi(u \cdot v)(w) = \Phi(\Ll_u^\cdot(v))(w) = \ad_u^*(\Phi(v))(w) \\
&= -\Phi(v)(\ad_u(w)) = -T(v, [u,w]) \\
&= -T(w\cdot v, u) \overset{\eqref{N1}}{=} -T(w, u \cdot v).
\end{aligned}
\]
Thus, we get:
\begin{equation}\label{N2}
T(u\cdot v,w) = -T(v,[u,w]) = -T(w,u\cdot v).
\end{equation}
It follows that:
\begin{align*}
T(u\cdot v,w) &= -T(v,[u,w]) \\
&= -T(v,u\cdot w) + T(v,w\cdot u) \\
&\overset{\eqref{N1}}{=} -T(v\cdot w,u) + T(v\cdot u,w) \\
&\overset{\eqref{N2}}{=} T(w,[v,u]) + T(u,[w,v]).
\end{align*}
This yields:
\begin{equation}\label{N3}
T(w,[u,v]) + T(u,[v,w]) + T(v,[w,u]) = 0.
\end{equation}
Since $(\mathcal{A},\cdot)$ is Lie-admissible, and by \eqref{N2}, we deduce that:
\begin{equation}\label{N4}
T([u,v],w) + T([v,w],u) + T([w,u],v) = 0.
\end{equation}

Now, we decompose the bilinear form $T$ into its symmetric and skew-symmetric parts. Define the bilinear forms $T_s, T_a : \mathcal{A} \times \mathcal{A} \to \mathbb{K}$ by:
\[
T_s(u,v) := \frac{1}{2}\Big(T(u,v) + T(v,u)\Big), \qquad
T_a(u,v) := \frac{1}{2}\Big(T(u,v) - T(v,u)\Big),
\]
for all $u,v \in \mathcal{A}$, so that $T = T_s + T_a$.

Using the properties of $T$, for all $u,v,w \in \mathcal{A}$, we obtain:
\[
\begin{aligned}
T_a([u, v], w) &= \frac{1}{2}\Big(T([u, v], w) - T(w, [u, v])\Big) \\
&= \frac{1}{2}\Big(T(v, u \cdot w) - T(u\cdot w, v)\Big) \\
&\overset{\eqref{N3},\eqref{N4}}{=} -\frac{1}{2}\Big(T([v, w], u) + T([w, u], v) - T(u, [v, w]) + T([w, u], v)\Big) \\
&= -T_a([v, w], u) - T_a([w, u], v).
\end{aligned}
\]
It follows that:
\[
T_a([u, v], w) + T_a([v, w], u) + T_a([w, u], v) = 0.
\]
Moreover, since $T$ satisfies \eqref{N2}, we have:
\[
T_s(u\cdot v,w) = \frac{1}{2}\Big(T(u\cdot v,w) + T(w,u\cdot v)\Big) \overset{\eqref{N2}}{=} \frac{1}{2}\Big(-T(w,u\cdot v) - T(u\cdot v,w)\Big).
\]
Therefore, $T_s(u\cdot v,w) = -T_s(u\cdot v,w)$, which implies:
\[
T_s(u\cdot v,w) = 0.
\]
The rest of the proof follows by applying the same standard arguments in the proof of Theorem \ref{Thm20}.
\end{proof}

A consequence of Theorem~\ref{Thm7.11} is the following result for $\mathcal{A}$-bimodules.

\ssbegin{Corollary}
Let $(\mathcal{A},\cdot)$ be a left-symmetric algebra, and let $\Ll^\cdot$ and $\Rr^\cdot$ denote the left and right multiplication operators, respectively. Let $\ad_u$ denote the adjoint operator of the Lie algebra associated with $(\mathcal{A},\cdot)$. Then, there exists a bilinear form $\omega: \mathcal{A} \times \mathcal{A} \to \mathbb{K}$ such that $(\mathcal{A}^-, \omega)$ is a symplectic Lie algebra if and only if the left-symmetric $\mathcal{A}$-bimodules $(\mathcal{A},\Ll^\cdot,\Rr^\cdot)$ and $(\mathcal{A}^{*}, \ad^*, -(\Rr^\cdot)^{*})$ are isomorphic.
\end{Corollary}

\ssbegin{Theorem}\label{constriction12} 
Let $(\mathcal{A},\cdot)$ be a left-symmetric algebra, and let $\Ll^\cdot$ denote the left multiplication operator. Then, the vector space $T^*(\mathcal{A}) = \mathcal{A} \oplus \mathcal{A}^*$, endowed with the bracket
\begin{equation}\label{T-ex}
[u+f, \, v+g] = [u,v]_{\cdot} + \Ll_u^*(g) - \Ll_v^*(f),
\end{equation}
for all $u,v \in \mathcal{A}$ and $f,g \in \mathcal{A}^*$, is a Lie algebra, 

Moreover, the bilinear form $\omega_c : T^{*}(\mathcal{A}) \times T^{*}(\mathcal{A}) \to \mathbb{K}$ defined by
\[
\omega_c(u+f, \; v+g) = f(v) - g(u),
\]
for all $u,v\in\mathcal{A}$ and $f,g\in\mathcal{A}^{*}$, is skew-symmetric, nondegenerate, and a $2$-cocycle on the Lie algebra $(T^{*}(\mathcal{A}), \br)$. Consequently, $(T^{*}(\mathcal{A}), \br, \omega_c)$ is a symplectic Lie algebra.

In this case, the symplectic Lie algebra $(T^*(\A), \br , \omega_c)$ is called the $T^*$-extension of $\A$.
\end{Theorem}

\begin{proof}
The proof is analogous to that of Theorem~\ref{construction}.
\end{proof}

\ssbegin{Corollary}\label{PP}
Let $(\g, \br, \omega)$ be a symplectic Lie algebra, and let $\cdot$ denote the associated left-symmetric product defined by~\eqref{Produit as1}. Then, $(T^{*}(\g), \br, \omega_c)$ is a symplectic Lie algebra.
\end{Corollary}

\begin{proof}
Since the product $\cdot$ defined by~\eqref{Produit as1} endows $\g$ with a left-symmetric algebra structure, the assertion follows directly from Theorem~\ref{constriction12}.
\end{proof}

\section{Some extensions of left-symmetric algebras}
In this section, we study the double extension process, which consists of two successive and interdependent extensions. The first is an annihilator extension of a left-symmetric $\Ll$-algebra, while the second is an almost-semidirect product of left-symmetric $\Ll$-algebras.

\subsection{Annihilator extensions of left-symmetric $\mathrm{L}$-algebras}

Let $(\mathcal{A}, \bullet)$ be a left-symmetric $\mathrm{L}$-algebra, let $\mathfrak{h}$ be a vector space, and let $\mu : \mathcal{A} \times \mathcal{A} \rightarrow \mathfrak{h}$ be a bilinear map. We define a new product $\diamond$ on the vector space $\widetilde{\mathcal{A}} := \mathcal{A} \oplus \mathfrak{h}$ as follows:
\[
(u+a)\diamond(v+b) := u \bullet v + \mu(u,v) \qquad \text{for all } u,v \in \mathcal{A}, \; a,b \in \mathfrak{h}.
\]
Then, $(\widetilde{\mathcal{A}}, \diamond)$ is a left-symmetric $\mathrm{L}$-algebra if and only if the following conditions hold ( for all $u,v,w\in \mathcal{A}$):
\begin{equation}\label{con-left}
\mu([u,v]_\bullet, w) = 0 \quad \text{and} \quad \mu(u,v\bullet w) = \mu(v, u\bullet w).
\end{equation}

In this case, $(\widetilde{\mathcal{A}}, \diamond)$ is called the \emph{annihilator extension} of $(\mathcal{A}, \bullet)$ by means of $\mu$.

\subsection{Almost-semidirect products of left-symmetric $\mathrm{L}$-algebras}

Let $(\mathcal{A}, \bullet)$ be a left-symmetric $\mathrm{L}$-algebra and let $(\mathfrak{h}, \circ)$ be a commutative associative algebra. Consider two linear maps $F , G: \mathfrak{h} \rightarrow \mathrm{End}(\mathcal{A})$ and a bilinear map $\theta : \mathfrak{h}\times\mathfrak{h} \rightarrow \mathcal{A}$. On the vector space $\overline{\mathcal{A}} := \mathfrak{h} \oplus \mathcal{A}$, we define a new product $\bar{\bullet}$ as follows:
\[
a \bar{\bullet} b := a\circ b + \theta(a,b), \qquad a \bar{\bullet} u := F(a)u, \qquad u \bar{\bullet} a := G(a)u, \qquad u \bar{\bullet} v := u \bullet v,
\]
for all $a,b\in \mathfrak{h}$ and $u,v \in \mathcal{A}$.  

Then $(\overline{\mathcal{A}}, \bar{\bullet})$ is a left-symmetric $\mathrm{L}$-algebra if and only if the following conditions are satisfied for all $u,v \in \mathcal{A}$ and $a,b,c \in \mathfrak{h}$:
\begin{equation}\label{produit-semi}
\begin{aligned}
G(a)([u,v]_\bullet) &= 0, & u\bullet G(a)v &= v\bullet G(a)u, \\
G(a)(u)\bullet v &= F(a)(u)\bullet v, & F(a)(u\bullet v ) &= u\bullet F(a)(v), \\
G(b)G(a) &= G(b)F(a), & F(a)F(b) &= F(b)F(a), \\
F(a)G(b)u &= G(a\circ b)u + u\bullet \theta(a,b), & \theta(a, b)\bullet u &= \theta(b,a)\bullet u, \\
G(c)\big(\theta(a,b)\big) &= G(c)\big(\theta(b,a)\big), & \theta(a, b\circ c) - \theta(b, a\circ c) &= F(b)\theta(a,c) - F(a)\theta(b,c).
\end{aligned}
\end{equation}

\ssbegin{Definition}
If $(\mathfrak{h}, \mathcal{A}, F, G, \theta)$ satisfies the compatibility conditions \eqref{produit-semi}, then the left-symmetric $\mathrm{L}$-algebra $(\overline{\mathcal{A}}, \bar{\bullet})$ is called the \emph{almost-semidirect product} of the left-symmetric $\mathrm{L}$-algebra $(\mathcal{A},\bullet)$ by the commutative associative algebra $(\mathfrak{h}, \circ)$ by means of $(F,G, \theta)$. In this case, the $5$-tuple $(\mathfrak{h}, \mathcal{A}, F, G, \theta)$ is referred to as an \emph{almost-semidirect product context of left-symmetric $\mathrm{L}$-algebras}.
\end{Definition}

\subsection{Double extensions of pre-symplectic left-symmetric algebras}

Let $(\mathcal{A}, \bullet_\mathcal{A}, \omega_\mathcal{A})$ be a pre-symplectic left-symmetric algebra, let $(\mathfrak{h},\circ)$ be a commutative associative algebra, and let $\mathfrak{h}^*$ denote its dual. 

The process of constructing the double extension is summarized in the following diagram:
\begin{equation*}
\begin{tikzcd}
(\mathcal{A}, \bullet_\mathcal{A}, \omega_\mathcal{A}) \arrow[r, "\text{Step 1}"] & 
(\mathcal{A}, \bullet) \arrow[r, "\text{Step 2}"] & 
(\widetilde{\mathcal{A}} := \mathcal{A} \oplus \mathfrak{h}^*, \widetilde{\bullet}) \arrow[r, "\text{Step 3}"] & 
(\overline{\mathcal{A}} := \mathfrak{h} \oplus \mathcal{A} \oplus \mathfrak{h}^*, \bullet) \arrow[d, "\text{Step 4}"] \\
&&& (\overline{\mathcal{A}}, \bullet, \omega)
\end{tikzcd}
\end{equation*}
In what follows, we describe this construction in detail.

\medskip

\noindent
\textbf{Step 1.} According to Proposition~\ref{condition plate}, $(\mathcal{A}, \bullet)$ is a left-symmetric $\mathrm{L}$-algebra.

\medskip

\noindent
\textbf{Step 2.} Construct the annihilator extension of the left-symmetric $\mathrm{L}$-algebra $(\mathcal{A}, \bullet)$ by the vector space $\mathfrak{h}^*$.

\medskip

\noindent
\textbf{Step 3.} We then construct the almost-semidirect product of $\mathcal{A} \oplus \mathfrak{h}^*$ by the commutative associative algebra $(\mathfrak{h}, \circ)$, thereby ensuring that $(\overline{\mathcal{A}}, \bullet)$ is a left-symmetric $\mathrm{L}$-algebra.

\medskip

\noindent
\textbf{Step 4.} We extend the bilinear form $\omega_{\mathcal{A}}$ to $\omega$ on $\overline{\mathcal{A}}$ and verify that the left multiplications associated with $\bullet$ are symmetric with respect to $\omega$. Consequently, we obtain a pre-symplectic left-symmetric algebra $(\overline{\mathcal{A}}, \bullet, \omega)$.

\medskip

In order to define the double extension of $(\mathcal{A}, \bullet_\mathcal{A}, \omega_\mathcal{A})$ by $(\mathfrak{h}, \circ)$, we consider linear maps $F, G : \mathfrak{h} \to \mathrm{End}(\mathcal{A})$, a bilinear map $\theta : \mathfrak{h} \times \mathfrak{h} \to \mathcal{A}$, a skew-symmetric bilinear map $\Omega : \mathfrak{h} \times \mathfrak{h} \to \mathcal{A}$, and a bilinear map $L: \mathfrak{h} \times \mathfrak{h} \to \mathfrak{h}^*$ such that $F(a)$ is symmetric with respect to $\omega_{\mathcal{A}}$ for all $a\in \mathfrak{h}$, and $L(a, b)(c) = -L(a, c)(b)$ for all $a,b,c\in\mathfrak{h}$.

To complete the construction, we introduce the following additional linear and bilinear maps:
\[
\begin{cases}
\mu : \mathcal{A} \times \mathcal{A} \longrightarrow \mathfrak{h}^*, \qquad R : \mathcal{A} \times \mathfrak{h} \longrightarrow \mathfrak{h}^*, \\
S : \mathfrak{h} \times \mathcal{A} \longrightarrow \mathfrak{h}^*, \qquad N : \mathfrak{h} \longrightarrow \mathrm{End}(\mathfrak{h}^*), \\
\widetilde{F}, \widetilde{G} : \mathfrak{h} \longrightarrow \mathrm{End}(\mathcal{A} \oplus \mathfrak{h}^*), \quad \widetilde{\theta} : \mathfrak{h} \times \mathfrak{h} \longrightarrow \mathcal{A} \oplus \mathfrak{h}^*,
\end{cases}
\]
which are defined for all $u, v \in \mathcal{A}$, $a, b, c \in \mathfrak{h}$, and $f \in \mathfrak{h}^*$ by:
\begin{equation}\label{eq:def-maps}
\begin{cases}
N(a)(f)(b) := f(a \circ b), \\[3pt]
\mu(u, v)(a) := -\omega_\mathcal{A}(G(a)u, v), \\[3pt]
S(a, u)(b) := \omega_\mathcal{A}(u, \theta(a,b)), \\[3pt]
R(u, a)(b) := \omega_\mathcal{A}(\Omega(a,b), u), \\[3pt]
\widetilde{G}(a)(u + f) := G(a)u + R(u, a), \\[3pt]
\widetilde{F}(a)(u + f) := F(a)u + S(a, u) + N(a)(f), \\[3pt]
\widetilde{\theta}(a, b) := \theta(a, b) + L(a, b).
\end{cases}
\end{equation}

\sssbegin{Lemma}\label{le1} \label{pr-central1}
Let $\Tilde{\A} := \A \oplus \h^*$ be the vector space equipped with the product
\[
(u + f) \widetilde{\bullet} (v + g) := u \bullet_\A v + \mu(u, v), 
\quad \forall u, v \in \A, \; f, g \in \h^*.
\]
Then $(\widetilde{\mathcal{A}}, \widetilde{\bullet})$ is a left-symmetric $\mathrm{L}$-algebra if and only if the following identities hold for all $u, v \in \mathcal{A}$ and $a \in \mathfrak{h}$:
\begin{equation}
G(a)([u,v]_{\bullet_\A})=0\esp u\bullet_\A G(a)v= v\bullet_\A G(a)u
\quad \text{ for all} u, v \in \A, \; a \in \h.
\label{eq:xi-condition0}
\end{equation}
\end{Lemma}

\begin{proof}
$(\Tilde{\A}, \bullet)$ is left-symmetric $\Ll$-algebra if and only if the bilinear map 
\[
\mu(u,v)(a) := -\om_\A(G(a)u, v)
\]
satisfies Eq. \eqref{con-left}, which is equivalent to Eq.  \eqref{eq:xi-condition0}. 
\end{proof}

Now, let us also assume that $(\mathfrak{h}, \mathcal{A}, F, G, \theta)$ is an almost-semidirect product context of left-symmetric $\mathrm{L}$-algebras.

\sssbegin{Lemma}\label{le2}
The $5$-tuple $(\mathfrak{h}, \widetilde{\mathcal{A}}=\mathcal{A}\oplus \mathfrak{h}^*, \widetilde{F}, \widetilde{G}, \widetilde{\theta})$ defines an almost-semidirect context product of left-symmetric $\mathrm{L}$-algebras if and only if,  the following identities hold for all $u,v\in \mathcal{A}$ and $a, b,c,l\in \mathfrak{h}$:
\begin{equation}\label{les equation}
    \begin{cases}
      G(a)^*G(b)=-G(b)^*G(a) , G(a)^*\Om(b,c)=F(a)\Om(b,c),\\ G(c)\theta(a,b)=G(c)\theta(b,a),\\ G(b)^*\theta(a,c)+\Om(a\circ c, b)
    =-G(c)^*\theta(a,b)-\Om(a\circ b, c),\\ L(a, b\circ c)(l)-L(b, a\circ c)(l)=-L(a, b\circ l)(c)+L(b, a\circ l)(c),\\ \om_\A(\Om(a,b), [u,v]_{\bullet_\A})=0,\;  \om_\A(\Om(c, l), \theta(a,b)-\theta(b,a))=0.
    \end{cases}
\end{equation}
\end{Lemma}

\begin{proof}
Let $a, b, c, l \in \mathfrak{h}$, $u, v, w \in \mathcal{A}$, and $f, g \in \mathfrak{h}^*$. By using the definitions of the extended maps given in \eqref{eq:def-maps} and expansion under the compatibility conditions, straightforward computations yield the following equivalences:
\begin{itemize}
    \item The condition $\widetilde{G}(a)\big([u+f, v+g]_{\widetilde{\bullet}}\big) = 0$ holds if and only if
    \[
    G(a)([u,v]_{\bullet_\mathcal{A}}) = 0 \quad \text{and} \quad \omega_\mathcal{A}\big(\Omega(a,b), [u,v]_{\bullet_\mathcal{A}}\big) = 0.
    \]

    \item The equation $(u+f) \widetilde{\bullet} \widetilde{G}(a)(v+g) = (v+g) \widetilde{\bullet} \widetilde{G}(a)(u+f)$ is equivalent to
   \[
    u\bullet_\A G(v)=v\bullet_\A G(u), 
    \quad \text{and} \quad G(a)^*G(b)=-G(b)^*G(a).
    \]

    \item The relation $\widetilde{G}(a)(u+f) \widetilde{\bullet} (v+g) = \widetilde{F}(a)(u+f) \widetilde{\bullet} (v+g)$ is equivalent to
    \[
    G(a)(u) \bullet_\mathcal{A} v = F(a)(u) \bullet_\mathcal{A} v \quad \text{and} \quad G(b)G(a) = G(b)F(a).
    \]

    \item The condition $\widetilde{F}(a)\big((u+f) \widetilde{\bullet} (v+g)\big) = (u+f) \widetilde{\bullet} \widetilde{F}(a)(v+g)$ is equivalent to
    \[
    F(a)(u \bullet_\mathcal{A} v) = u \bullet_\mathcal{A} F(a)v \quad \text{and} \quad F(a)G(b)u = G(a\circ b)u + u \bullet_\mathcal{A} \theta(a,b).
    \]

    \item The identity $\widetilde{G}(b) \circ \widetilde{G}(a) = \widetilde{G}(b) \circ \widetilde{F}(a)$ is equivalent to
    \[
    G(b)G(a) = G(b)F(a) \quad \text{and} \quad G(a)^*\Omega(b,c) = F(a)\Omega(b,c).
    \]

    \item The equation $\widetilde{F}(a) \circ \widetilde{F}(b) = \widetilde{F}(b) \circ \widetilde{F}(a)$ reduces to
    \[
    F(a)F(b) = F(b)F(a) \quad \text{and} \quad F(b)\theta(a,c) + \theta(b, a\circ c) = F(a)\theta(b,c) + \theta(a, b\circ c).
    \]

    \item The relation $\widetilde{F}(a) \circ \widetilde{G}(b)(u) = \widetilde{G}(a\circ b)u + u \widetilde{\bullet} \widetilde{\theta}(a,b)$ is equivalent to
    \[
    F(a)G(b)u = G(a\circ b)u + u \bullet_\mathcal{A} \theta(a,b) \quad \text{and} \quad  -G(b)^*\theta(a,c)+\Om(b,a\circ c)
    =G(c)^*\theta(a,b)+\Om(a\circ b, c).
    \]

    \item The equation $\widetilde{\theta}(a, b) \widetilde{\bullet} (u+f) = \widetilde{\theta}(b, a) \widetilde{\bullet} (u+f)$ is equivalent to
    \[
    \theta(a,b) \bullet_\mathcal{A} u = \theta(b,a) \bullet_\mathcal{A} u \quad \text{and} \quad G(c)\theta(a,b) = G(c)\theta(b,a).
    \]

    \item The relation $\widetilde{G}(c)\widetilde{\theta}(a, b) = \widetilde{G}(c)\widetilde{\theta}(b, a)$ is equivalent to
    \[
    G(c)\theta(a,b) = G(c)\theta(b,a) \quad \text{and} \quad \omega_\mathcal{A}\big(\Omega(c, l), \theta(b,a) - \theta(a, b)\big) = 0.
    \]

    \item Finally, the  Condition $\widetilde{\theta}(a, b\circ c) - \widetilde{\theta}(b, a\circ c) = \widetilde{F}(b)\widetilde{\theta}(a,c) - \widetilde{F}(a)\widetilde{\theta}(b,c)$ is satisfied if and only if
    \[
    \theta(a, b\circ c) - \theta(b, a\circ c) = F(b)\theta(a,c) - F(a)\theta(b,c),
    \]
    and
    \[
    L(a, b\circ c)(l) - L(b, a\circ c)(l) = -L(a, b\circ l)(c) + L(b, a\circ l)(c).
    \]
\end{itemize}

\end{proof}

Now, we are in a good position to introduce the notion of a \textit{double extension} of a pre-symplectic left-symmetric algebra by a commutative associative algebra.

\sssbegin{Theorem}\label{Double-extension}
Let $(\mathcal{A}, \bullet_{\mathcal{A}}, \omega_{\mathcal{A}})$ be a pre-symplectic left-symmetric algebra, and let $(\mathfrak{h}, \circ)$ be a commutative associative algebra. Assume that $(\mathfrak{h}, \mathcal{A}, F, G, \theta)$ is an almost-semidirect product context of left-symmetric $\mathrm{L}$-algebras such that, for every $a \in \mathfrak{h}$, the linear map $F(a)$ is symmetric with respect to $\omega_{\mathcal{A}}$. Let $\theta : \mathfrak{h} \times \mathfrak{h} \to \mathcal{A}$ be a bilinear map, $\Omega : \mathfrak{h} \times \mathfrak{h} \to \mathcal{A}$ be a skew-symmetric bilinear map, and $L : \mathfrak{h} \times \mathfrak{h} \to \mathfrak{h}^{*}$ be a bilinear map satisfying 
\[
L(a,b)(c) = -L(a,c)(b) \quad \text{for all } a,b,c \in \mathfrak{h}.
\]
Moreover, assume that $(F,G,\theta,L,\Omega)$ satisfies the system \eqref{les equation}, and that the following identifications hold for all $a,b \in \mathfrak{h}$ and $u,v \in \mathcal{A}$:
\[
(\omega_{\mathcal{A}}(G(\cdot)u, v))(a) = \omega_{\mathcal{A}}(G(a)u, v), \; (\omega_{\mathcal{A}}(\Omega(a, \cdot), u))(b) = \omega_{\mathcal{A}}(\Omega(a,b), u)\; (\omega_{\mathcal{A}}(\theta(a, \cdot), u))(b) = \omega_{\mathcal{A}}(\theta(a,b), u).
\]
We define a bilinear product $\bullet$ on the vector space $\overline{\mathcal{A}} := \mathfrak{h} \oplus \mathcal{A} \oplus \mathfrak{h}^*$ by:
\begin{equation}\label{eq:prod-tildeA}
\begin{cases}
u \bullet v = u \bullet_{\mathcal{A}} v - \omega_{\mathcal{A}}(G(\cdot)u, v), \\[3pt]
u \bullet a = G(a)u + \omega_{\mathcal{A}}(\Omega(a, \cdot), u), \\[3pt]
a \bullet u = F(a)u - \omega_{\mathcal{A}}(\theta(a, \cdot), u), \\[3pt]
a \bullet b = a \circ b + \theta(a,b) + L(a,b), \\[3pt]
a \bullet f = f \circ \Ll_a^\circ,
\end{cases}
\end{equation}
for all $a,b \in \mathfrak{h}$, $u,v \in \mathcal{A}$, and $f \in \mathfrak{h}^*$.

Then $(\overline{\mathcal{A}}, \bullet)$ is a left-symmetric $\mathrm{L}$-algebra.

Moreover, the skew-symmetric bilinear form $\omega : \overline{\mathcal{A}} \times \overline{\mathcal{A}} \longrightarrow \mathbb{K}$ defined by
\[
\omega(u + a + f, \, v + b + g) := \omega_{\mathcal{A}}(u, v) + f(b) - g(a)
\]
is nondegenerate, and the left multiplication operators associated with $\bullet$ are symmetric with respect to $\omega$.  In this case, $(\overline{\A}, \bullet, \om)$ is a pre-symplectic left-symmetric algebra.
\end{Theorem}

The pre-symplectic left-symmetric $\mathrm{L}$-algebra $(\overline{\mathcal{A}}, \bullet, \omega)$ is called the \emph{generalized double extension} of the pre-symplectic left-symmetric algebra $(\mathcal{A}, \bullet_\mathcal{A}, \omega_{\mathcal{A}})$ by the commutative associative algebra $(\mathfrak{h}, \circ)$ by means of $(F, G, \theta, L, \Omega)$.

 We will also say that $(\mathfrak{h}, \mathcal{A}, F, G, \theta, L, \Omega)$ is a \emph{double extension context} of the pre-symplectic left-symmetric algebra $(\mathcal{A}, \bullet_\mathcal{A}, \omega_\mathcal{A})$ by the commutative associative algebra $(\mathfrak{h}, \circ)$.

\begin{proof}
Since $(\mathfrak{h}, \mathcal{A}, F, G, \theta)$ is an almost-semidirect product context of left-symmetric $\mathrm{L}$-algebras, the conditions in \eqref{eq:xi-condition0} are satisfied. Hence, the product on $\widetilde{\mathcal{A}} := \mathcal{A} \oplus \mathfrak{h}^{*}$ given for all $u,v \in \mathcal{A}$ and $f,g \in \mathfrak{h}^{*}$ by
\[
(u+f) \widetilde{\bullet} (v+g) = u \bullet_{\mathcal{A}} v - \omega_{\mathcal{A}}\big(G(\cdot)u, v\big)
\]
defines a left-symmetric $\mathrm{L}$-algebra. In other words, this constitutes an annihilator extension of $(\mathcal{A}, \bullet_{\mathcal{A}})$ by $\mathfrak{h}^{*}$ by means of $\mu$, where
\[
\mu(u,v)(a) = -\omega_{\mathcal{A}}\big(G(a)u, v\big) \quad \text{for all } u,v \in \mathcal{A} \text{ and } a \in \mathfrak{h}.
\]

Furthermore, since $F(a)$ is symmetric with respect to $\omega_{\mathcal{A}}$ for all $a \in \mathfrak{h}$ and the compatibility system \eqref{les equation} is satisfied, it follows from Lemma~\ref{le2} that $(\mathfrak{h}, \widetilde{\mathcal{A}}, \widetilde{F}, \widetilde{G}, \widetilde{\theta})$ is itself an almost-semidirect product context of left-symmetric $\mathrm{L}$-algebras. Consequently, the vector space $\overline{\mathcal{A}} := \mathfrak{h} \oplus \mathcal{A} \oplus \mathfrak{h}^{*}$ equipped with the product $\bullet$ defined in \eqref{eq:prod-tildeA} is a left-symmetric $\mathrm{L}$-algebra.

Finally, one can easily verify that the bilinear form $\omega$ defines a pre-symplectic structure on $(\overline{\A},\bullet)$.
\end{proof}

\section{Some particular cases of double extensions of pre-symplectic left-symmetric algebras}

In this section, we study some particular cases of double extensions for which the defining equations simplify significantly.

Let $(\mathcal{A}, \bullet_\mathcal{A}, \omega_\mathcal{A})$ be a pre-symplectic left-symmetric algebra and let $(\mathfrak{h}, \mathcal{A}, F, G, \theta, L, \Omega)$ be a double extension context of $(\mathcal{A}, \bullet_\mathcal{A}, \omega_\mathcal{A})$ by a commutative associative algebra $(\mathfrak{h}, \circ)$.

\subsection{Double extensions by one-dimensional algebras}
In this case, $\dim \mathfrak{h} = 1$. We write $\mathfrak{h} := \mathbb{K} d$ and $\mathfrak{h}^{*} := \mathbb{K} e$, where $e(d) = 1$. Hence, the maps $F, G, \theta, L, \Omega$ and the product $\circ$ are determined by
\begin{equation}\label{Lesapp1}
F(d) = \xi, \quad G(d) = D, \quad \theta(d,d) = b_0, \quad L = \Omega = 0, \quad d \circ d = \lambda d,
\end{equation}
where $\xi \in \mathrm{End}(\mathcal{A})$ is symmetric with respect to $\omega_{\mathcal{A}}$, $D \in \mathrm{End}(\mathcal{A})$, $b_0 \in \mathcal{A}$, and $\lambda \in \mathbb{K}$.

The compatibility equations from \eqref{produit-semi} and \eqref{les equation} reduce precisely to the following system for all $u,v \in \mathcal{A}$:
\begin{equation}\label{Lesrelation dim1}
\begin{cases}
D([u,v]_{\bullet_\mathcal{A}}) = 0, \quad u \bullet_\mathcal{A} D(v) = v \bullet_\mathcal{A} D(u), \\[3pt]
D(u) \bullet_\mathcal{A} v = \xi(u) \bullet_\mathcal{A} v, \quad \xi(u \bullet_\mathcal{A} v) = u \bullet_\mathcal{A} \xi(v), \\[3pt]
\xi \circ D = \lambda D + \Rr^{\bullet_\mathcal{A}}_{b_0}, \quad D^2 = D \circ \xi, \quad D^* \circ D = 0, \quad G^*(b_0) = 0.
\end{cases}
\end{equation}

The following theorem summarizes the double extension of pre-symplectic left-symmetric algebras by one-dimensional algebras.

\sssbegin{Theorem}\label{double-ex1}
Let $(\mathcal{A}, \bullet_\mathcal{A}, \omega_{\mathcal{A}})$ be a pre-symplectic left-symmetric algebra. Let $\mathfrak{h} := \mathbb{K}d$ be a one-dimensional algebra and $\mathfrak{h}^* = \mathbb{K}e$ denote its dual. Assume that there exist two linear maps $\xi, D : \mathcal{A} \to \mathcal{A}$, an element $b_{0} \in \mathcal{A}$, and a scalar $\lambda \in \mathbb{K}$ such that $\xi$ is symmetric with respect to $\omega_{\mathcal{A}}$ and the system  \eqref{Lesrelation dim1} is satisfied. We define a bilinear product $\bullet$ on the vector space $\overline{\mathcal{A}} := \mathbb{K}d \oplus \mathcal{A} \oplus \mathbb{K}e$ by:
\begin{equation}\label{Produit1}
\begin{array}{lll} \displaystyle 
e \bullet u = u \bullet e=e \bullet d =0,& d \bullet e = \la e,& \displaystyle  
d \bullet u = \xi(u) -\om_\A( b_0, u) e, \\[0.3em]
\displaystyle  u \bullet d = D(u), &d \bullet d =  \la d+b_0, & 
u \bullet v = u \bullet_A v - \om_\A( D(u), v) e, 
\end{array}
\end{equation}
for all $u,v \in \mathcal{A}$, and a bilinear form $\omega$ given by:
\[
\omega|_{\mathcal{A} \times \mathcal{A}} = \omega_{\mathcal{A}}, \qquad \omega(e,d) =- \omega(d,e)= 1, \qquad \omega(d, \mathcal{A}) = \omega(e, \mathcal{A}) = \{0\}.
\]
Then $(\overline{\mathcal{A}}, \bullet, \omega)$ is a pre-symplectic left-symmetric algebra.
\end{Theorem}

The pre-symplectic left-symmetric algebra $(\overline{\mathcal{A}}, \bullet, \omega)$ is called the \emph{double extension} of the pre-symplectic left-symmetric algebra $(\mathcal{A}, \bullet_\mathcal{A}, \omega_{\mathcal{A}})$ by one-dimensional  algebra $\h$ by means of $(D, \xi, b_0, \la)$.

\subsection{Double extensions by two-dimensional commutative associative algebras}

In this case, $\dim \mathfrak{h} = 2$. We denote by $\mathfrak{h} := \mathbb{K} d_1 \oplus \mathbb{K} d_2$ and $\mathfrak{h}^{*} := \mathbb{K} e_1 \oplus \mathbb{K} e_2$, where $e_i(d_i) = 1$ and $e_i(d_j) = 0$ for $i \neq j$. Thus, the maps $F, G, \theta, L, \Omega$ and the product $\circ$ are defined by:
\begin{equation}\label{prdim2}
\begin{aligned}
F(d_i) &= \xi_i, \quad G(d_i) = D_i, \quad \theta(d_i, d_j) = b_{ij}, \quad \Omega(d_1,d_2) = -\Omega(d_2, d_1) = a_0, \quad \Omega(d_i,d_i) = 0, \\
L(d_1,d_1) &= l_1e_2, \quad L(d_1,d_2) = -l_{1}e_1, \quad L(d_2,d_1) = l_2e_2, \quad L(d_2,d_2) = -l_{2}e_1, \quad
d_i \circ d_j = t_{ij} d_1 + s_{ij}d_2,
\end{aligned}
\end{equation}
where $i,j \in \{1,2\}$, $\xi_i \in \mathrm{End}(\mathcal{A})$ are symmetric with respect to $\omega_{\mathcal{A}}$, $D_i \in \mathrm{End}(\mathcal{A})$, $a_0, b_{ij} \in \mathcal{A}$, and $l_i, t_{ij}, s_{ij} \in \mathbb{K}$ with $t_{ij}=t_{ji}$ and $s_{ij}=s_{ji}$.

The compatibility equations from \eqref{produit-semi} and \eqref{les equation} reduce precisely to the following system for all $u,v \in \mathcal{A}$ and $i,j,k \in \{1,2\}$:
\begin{equation}\label{Les relation dim2}
\begin{cases}
D_i([u,v]_{\bullet_\mathcal{A}}) = 0, \quad u \bullet_\mathcal{A} D_i(v) = v \bullet_\mathcal{A} D_i(u), \quad D_i(u) \bullet_\mathcal{A} v = \xi_i(u) \bullet_\mathcal{A} v, \\[3pt]
\xi_i(u \bullet_\mathcal{A} v) = u \bullet_\mathcal{A} \xi_i(v), \quad D_i \circ D_j = D_i \circ \xi_i, \quad \xi_i \circ \xi_j = \xi_j \circ \xi_i, \\[3pt]
\xi_i \circ \xi_j(u) = t_{ij} D_1(u) + s_{ij}D_2(u) + \Rr_{b_{ij}}^{\bullet_\mathcal{A}}(u), \quad D_i^* \circ D_j = -D_j^* \circ D_i, \quad \Ll_{b_{ij}-b_{ji}}^{\bullet_\mathcal{A}} = 0, \\[3pt]
D_k(b_{ij} - b_{ji}) = 0, \quad \xi_j(b_{ik}) - \xi_i(b_{jk}) = t_{jk}b_{i1} + s_{jk}b_{i2} - t_{ik}b_{j1} - s_{ik}b_{j2}, \\[3pt]
(D_i^* - \xi_i)(a_0) = 0, \quad D_i(b_{jk} - b_{kj}) = 0, \quad D_i^*(b_{j1}) = s_{1j}a_0, \\[3pt]
D_2^*(b_{j2}) = t_{2j}a_0, \quad D_1^*(b_{i1}) - s_{i2}a_0 = -D_2^*(b_{i1}) - t_{i1}a_0, \\[3pt]
l_1s_{12} - l_2s_{11} = 0, \quad l_1t_{22} - l_2t_{12} = 0, \quad l_1(t_{12} - s_{22}) + l_2(s_{12} - t_{11}) = 0, \\[3pt]
\omega_\mathcal{A}\big(a_0, [u,v]_{\bullet_\mathcal{A}}\big) = 0, \quad \omega_\mathcal{A}(a_0, b_{ij} - b_{ji}) = 0.
\end{cases}
\end{equation}

Therefore, we have the following theorem.

\sssbegin{Theorem}\label{double-extension2}
Let $(\mathcal{A}, \bullet_\mathcal{A}, \omega_{\mathcal{A}})$ be a pre-symplectic left-symmetric algebra and let $(\mathfrak{h}, \circ)$ be a two-dimensional commutative associative algebra. Denote by $\mathfrak{h} := \mathbb{K}d_1 \oplus \mathbb{K}d_2$ and $\mathfrak{h}^{*} := \mathbb{K}e_1 \oplus \mathbb{K}e_2$, where $e_i(d_i) = 1$ and $e_i(d_j) = 0$ for $i \neq j$, $i,j \in \{1,2\}$.

Assume that there exist linear maps $\xi_i, D_i : \mathcal{A} \to \mathcal{A}$, elements $b_{ij}, a_0 \in \mathcal{A}$, and scalars $l_i, t_{ij}, s_{ij} \in \mathbb{K}$ such that each $\xi_i$ is symmetric with respect to $\omega_{\mathcal{A}}$, $t_{ij} = t_{ji}$, $s_{ij} = s_{ji}$, and the system \eqref{Les relation dim2} is satisfied. We define a bilinear product $\bullet$ on the vector space $\overline{\mathcal{A}} := \mathfrak{h} \oplus \mathcal{A} \oplus \mathfrak{h}^*$ defined by:
\begin{equation}\label{Produit de dim2}
\begin{cases} 
d_1 \bullet u = \xi_1(u) - \omega_{\mathcal{A}}(b_{11}, u)e_1 + \omega_{\mathcal{A}}(b_{12}, u)e_2, \\[3pt]
d_2 \bullet u = \xi_2(u) - \omega_{\mathcal{A}}(b_{21}, u)e_1 - \omega_{\mathcal{A}}(b_{22}, u)e_2, \\[3pt]
u \bullet d_1 = D_1(u) + \omega_{\mathcal{A}}(a_0, u)e_2, \\[3pt]
u \bullet d_2 = D_2(u) - \omega_{\mathcal{A}}(a_0, u)e_1, \\[3pt]
u \bullet v = u \bullet_\mathcal{A} v - \omega_{\mathcal{A}}(D_1(u), v)e_1 - \omega_{\mathcal{A}}(D_2(u), v)e_2, \\[3pt]
d_i \bullet e_1 = t_{i1} e_1 + t_{i2}e_2, \\[3pt]
d_i \bullet e_2 = s_{i1} e_1 + s_{i2}e_2, \\[3pt]
d_1 \bullet d_1 = b_{11} + l_1 e_2 + t_{11}d_1 + s_{11}d_2, \\[3pt]
d_1 \bullet d_2 = b_{12} - l_1 e_1 + t_{12}d_1 + s_{12}d_2, \\[3pt]
d_2 \bullet d_1 = b_{21} + l_2 e_2 + t_{12}d_1 + s_{12}d_2, \\[3pt]
d_2 \bullet d_2 = b_{22} - l_2 e_1 + t_{22}d_1 + s_{22}d_2,
\end{cases}
\end{equation}
for all $u,v \in \mathcal{A}$ and $i \in \{1,2\}$, and a bilinear form $\omega$ given by:
\[
\omega|_{\mathcal{A} \times \mathcal{A}} = \omega_{\mathcal{A}}, \qquad \omega(e_i, d_i)=-\omega(d_i, e_i) = 1,
\]
for $i \in \{1,2\}$, with all other cross-terms being equal to zero.

Then $(\overline{\mathcal{A}}, \bullet, \omega)$ is a pre-symplectic left-symmetric algebra.
\end{Theorem}

The pre-symplectic left-symmetric algebra $(\overline{\mathcal{A}}, \bullet, \omega)$ is called the \emph{double extension} of the pre-symplectic left-symmetric algebra $(\mathcal{A}, \bullet_\mathcal{A}, \omega_{\mathcal{A}})$ by the two-dimensional commutative associative algebra $(\mathfrak{h}, \circ)$ by means of $(D_i, \xi_i, a_0, b_{ij}, l_i, t_{ij}, s_{ij})$ for $i,j \in \{1,2\}$.

\section{Pre-symplectic left-symmetric algebras where the commutator ideal is
nondegenerate}

In this section, we establish the converse of Theorems~\ref{Double-extension}, \ref{double-ex1}, and \ref{double-extension2}, and we characterize pre-symplectic left-symmetric algebras in terms of the notion of double extension.

\ssbegin{Proposition}\label{reduit1}
Let $(\mathcal{A}, \bullet, \omega)$ be a pre-symplectic left-symmetric algebra. Let $I \subseteq [\mathcal{A}, \mathcal{A}] \cap [\mathcal{A}, \mathcal{A}]^\perp$ be a two-sided ideal of $(\mathcal{A}, \bullet)$. Denote by $\mathcal{B} := I^\perp / I$ and $\mathfrak{h} := \mathcal{A} / I^\perp$, and let $\pi_{\mathcal{B}} : I^\perp \to \mathcal{B}$ and $\pi_{\mathfrak{h}} : \mathcal{A} \to \mathfrak{h}$ be the canonical projections. Then:
\begin{enumerate}
    \item $I^\perp$ is a two-sided ideal of $(\mathcal{A}, \bullet)$. Moreover, we have $I^\perp \bullet I = I \bullet \mathcal{A} = \{0\}$.
    
    \item $\mathcal{B}$ inherits a canonical structure of a pre-symplectic left-symmetric algebra, defined by:
    \[
    \pi_\mathcal{B}(u) \bullet_{\mathcal{B}} \pi_\mathcal{B}(v) := \pi_\mathcal{B}(u \bullet v), \qquad \omega_{\mathcal{B}}( \pi_\mathcal{B}(u), \pi_\mathcal{B}(v) ) := \omega(u, v),
    \]
    for all $u, v \in I^\perp$.
    
    \item $\mathfrak{h}$ inherits a canonical structure of a commutative associative algebra, defined by:
    \[
    \pi_{\mathfrak{h}}(u) \circ \pi_{\mathfrak{h}}(v) := \pi_{\mathfrak{h}}(u \bullet v),
    \]
    for all $u, v \in \mathcal{A}$. 
\end{enumerate}
\end{Proposition}

\begin{proof}
\begin{enumerate}
\item Let $u \in I$, $v \in I^\perp$, and $w \in \mathcal{A}$. Then, we have:
\[
\omega(w \bullet v, u) = \omega(v, w \bullet u) = 0,
\]
since $I$ is a two-sided ideal of $(\mathcal{A}, \bullet)$, which implies that $I^\perp$ is a right ideal of $(\mathcal{A}, \bullet)$. Moreover, since $u \in I \subseteq [\mathcal{A},\mathcal{A}] \cap [\mathcal{A},\mathcal{A}]^\perp$, equation \eqref{ortogonal} implies that $\Rr^\bullet_u = -(\Rr^\bullet_u)^*$. Thus:
\[
\omega(v \bullet w, u) = \omega(w, v \bullet u) = -\omega(w \bullet u, v) = 0,
\]
which shows that $I^\perp$ is also a left ideal of $(\mathcal{A}, \bullet)$ and $I^\perp \bullet I = \{0\}$. Hence, $I^\perp$ is a two-sided ideal of $(\mathcal{A}, \bullet)$. Furthermore, since $I \subseteq [\mathcal{A},\mathcal{A}] \cap [\mathcal{A},\mathcal{A}]^\perp$, according to Proposition~\ref{condition plate}, it follows that $\Ll^\bullet_u = 0$ for all $u \in I$, and hence $I \bullet \mathcal{A} = \{0\}$.

\item Since $I^\perp$ is a two-sided ideal of $(\A, \bullet)$, it follows that the operations:
\[
\pi_{\mathcal{B}}(u) \bullet_{\mathcal{B}} \pi_{\mathcal{B}}(v) := \pi_{\mathcal{B}}(u \bullet v) \quad \text{and} \quad \omega_{\mathcal{B}}(\pi_{\mathcal{B}}(u), \pi_{\mathcal{B}}(v)) := \omega(u,v)
\]
for all $u,v \in I^\perp$ are well defined. It is straightforward to verify that $(\mathcal{B}, \bullet_{\mathcal{B}}, \omega_{\mathcal{B}})$ is a pre-symplectic left-symmetric algebra.

\item Clearly, the operation $\pi_{\mathfrak{h}}(u) \circ \pi_{\mathfrak{h}}(v) := \pi_{\mathfrak{h}}(u \bullet v)$ for all $u, v \in \mathcal{A}$ defines a left-symmetric algebra structure on $\mathfrak{h}$. Since $[\mathcal{A}, \mathcal{A}] \subseteq I^\perp$, we obtain:
\[
\pi_{\mathfrak{h}}(u) \circ \pi_{\mathfrak{h}}(v) = \pi_{\mathfrak{h}}(v) \circ \pi_{\mathfrak{h}}(u)
\]
for all $u,v \in \mathcal{A}$, which proves that $\mathfrak{h}$ is indeed a commutative associative algebra.
\end{enumerate}
\end{proof}

\ssbegin{Theorem}\label{Lemme-ex}
Let $(\mathcal{A}, \bullet, \omega)$ be a pre-symplectic left-symmetric algebra. Assume that there exists a totally isotropic two-sided ideal  $I \subseteq [\mathcal{A}, \mathcal{A}] \cap [\mathcal{A}, \mathcal{A}]^\perp$ of $(\mathcal{A}, \bullet)$. Then $(\mathcal{A}, \bullet, \omega)$ is a double extension of the pre-symplectic left-symmetric algebra $(\mathcal{B} := I^\perp / I, \bullet_\mathcal{B}, \omega_\mathcal{B})$ by the commutative associative algebra $\mathfrak{h} := (\mathcal{A} / I^\perp, \circ)$, by means of $(F, G, \theta, L, \Omega)$.
\end{Theorem}

\begin{proof}
Since $I \subseteq [\mathcal{A}, \mathcal{A}] \cap [\mathcal{A}, \mathcal{A}]^\perp$ is a totally isotropic two-sided ideal of $(\mathcal{A}, \bullet)$, Proposition~\ref{reduit1} implies that $I^\perp$ is also a two-sided ideal of $(\mathcal{A}, \bullet)$, and $I^\perp \bullet I = I \bullet \mathcal{A} = \{0\}$. Moreover, $\mathcal{B} := I^\perp / I$ inherits a structure of a pre-symplectic left-symmetric algebra, and $\mathfrak{h} := \mathcal{A} / I^\perp$ inherits a structure of a commutative associative algebra.

Now, since $I \subseteq [\mathcal{A}, \mathcal{A}] \cap [\mathcal{A}, \mathcal{A}]^\perp$ is totally isotropic, we can choose a vector subspace $B \subseteq I^\perp$ such that $I^\perp = B \oplus I$. There exists a totally isotropic vector subspace $V \subseteq \mathcal{A}$ such that $B^\perp = I \oplus V$. The restriction $\omega_B$ of $\omega$ to $B$ is nondegenerate, and the map $I \longrightarrow V^*$, given by $u \mapsto \omega(u, \cdot)|_V$, is an isomorphism of vector spaces. Consequently, the pre-symplectic vector space $(B^\perp, \omega|_{B^\perp \times B^\perp})$ is isometric to $V \oplus V^{*}$ endowed with the canonical pre-symplectic  bilinear form:
\[
\omega_0(a + f, b + g) = f(b) - g(a) \quad \text{for all } a, b \in V, \; f,g \in V^{*}.
\]
Hence, we obtain the vector space decomposition:
$
(\mathcal{A}, \omega) \simeq (V \oplus B \oplus V^*, \; \omega_n = \omega_0 + \omega_B).$

Since both $I$ and $I^\perp$ are two-sided ideals of $(\mathcal{A}, \bullet)$ and $I \bullet \mathcal{A} = \{0\}$, the bilinear product $\bullet$ can be written  as:

\begin{equation}\label{decomposition-product}
 \begin{cases}
u\bullet v = u\bullet_B v +\mu(u,v),\\
u\bullet a = G(a)u + \tau(u,a),\\
a\bullet u = F(a)u +\varphi(a, u),\\
a\bullet b = a\circ_V b +\theta(a,b)+ L(a,b),\\
a\bullet f = N(a)(f),
\\ \Ll^\bullet_f=0,
\end{cases}
\end{equation}
for all $a, b \in V$, $u,v \in B$, and $f \in V^*$, where $a \circ_V b \in V$, $\theta(a, b), u \bullet_B v \in B$, and $L(a, b), \varphi(a,u), \tau(u,a), N(a)(f) \in V^*$, with $F(a), G(a) \in \mathrm{End}(B)$.

Since $(\mathcal{A}, \bullet, \omega)$ is a pre-symplectic left-symmetric algebra, it follows that $(B, \bullet_B)$ is a left-symmetric algebra and that the restriction $\omega_B := \omega|_{B \times B}$ is a pre-symplectic structure on $(B, \bullet_B)$. Therefore, $\mu$ satisfies Condition \eqref{con-left}. The canonical projection $\pi_\mathcal{B} : I^\perp \to \mathcal{B}$ identifies $(B, \bullet_B, \omega_B)$ with $(\mathcal{B}, \bullet_\mathcal{B}, \omega_\mathcal{B})$.

Moreover, since $(\mathcal{A}, \bullet)$ is a left-symmetric algebra and $[\mathcal{A}, \mathcal{A}] \subseteq I^\perp$, we deduce that $[a,b]_\bullet \in I^\perp$ for all $a, b \in V$, which implies:
\[
a \circ_V b = b \circ_V a \quad \text{for all } a,b \in V.
\]
Thus, $(V, \circ_V)$ is a commutative associative algebra. Moreover, the canonical projection $\pi_{\mathfrak{h}} : \mathcal{A} \to \mathfrak{h}$ identifies $(V, \circ_V)$ with $(\mathfrak{h}, \circ)$. Consequently, $(\mathcal{A}, \bullet, \omega)$ can be identified with $(\mathfrak{h} \oplus \mathcal{B} \oplus \mathfrak{h}^*, \bullet, \omega_n)$.

Using the fact that left multiplications in $(\mathcal{A}, \bullet)$ are symmetric with respect to $\omega$, we obtain the following explicit relations. First, for all $u,v \in \mathcal{B}$ and $a \in \mathfrak{h}$, we have:
\[
\mu(u,v)(a) = \omega_n(u \bullet v, a) = \omega(v, u \bullet a) = \omega_{\mathcal{B}}(v, G(a)u) = -\omega_{\mathcal{B}}(G(a)u, v),
\]
which implies that $\mu(u,v) = -\omega_{\mathcal{B}}(G(\cdot)u, v)$.

Next, for all $a,b \in \mathfrak{h}$ and $u \in \mathcal{B}$:
\[
\tau(u,a)(b) = \omega_n(u \bullet a, b) = -\omega_n(a, u \bullet b) = -\tau(u,b)(a).
\]
Thus, $\tau(u,a)(b) = - \tau(u,b)(a)$. Since $\omega_{\mathcal{B}}$ is nondegenerate, there exists a unique  bilinear map $\Omega : \mathfrak{h} \times \mathfrak{h} \to \mathcal{B}$ such that:
\[
\tau(u,a)(b) = \omega_{\mathcal{B}}(\Omega(b,a), u) = -\omega_{\mathcal{B}}(\Omega(a,b), u),
\]
and consequently $\Omega(a,b) = -\Omega(b,a)$.

For any $u,v \in \mathcal{B}$ and $a \in \mathfrak{h}$, we obtain:
\[
\omega_{\mathcal{B}}(F(a)u, v) = \omega_n(a \bullet u, v) = \omega_n(u, a \bullet v) = \omega_{\mathcal{B}}(u, F(a)v),
\]
showing that $F(a)$ is symmetric with respect to $\omega_{\mathcal{B}}$.

Similarly, for all $u \in \mathcal{B}$ and $a,b \in \mathfrak{h}$, we have:
\[
\varphi(a,u)(b) = \omega_n(a \bullet u, b) = \omega_n(u, a \bullet b) = \omega_{\mathcal{B}}(u, \theta(a,b)),
\]
which implies $\varphi(a,u) = \omega_{\mathcal{B}}(u, \theta(a,\cdot)) = -\omega_{\mathcal{B}}(\theta(a,\cdot), u)$.

Next, for all $a,b \in \mathfrak{h}$ and $f \in \mathfrak{h}^*$, we have:
\[
f(a \circ b) = -\omega_n(a \circ b, f) = -\omega_n(b, a \bullet f) = -\omega_n(b, N(a)(f)) = N(a)(f)(b),
\]
which yields $N(a)(f) = f \circ \Ll_a^{\circ}$.

Finally, for all $a,b,c\in \h$, we obtain
\[
L(a,b)(c)
= \omega_n(L(a,b),c)
= \omega(a\bullet b,c)
= \omega_n(b, a\bullet c)
= \omega(b, L(a,c))
= -L(a,c)(b),
\]
showing that
\[
L(a,b)(c) = -L(a,c)(b).
\]
The product \eqref{decomposition-product} can now be rewritten in an equivalent form.
\begin{equation}\label{decomposition-product1}
    \begin{cases}
u\bullet v = u\bullet_\mathcal{B} v - \om_\mathcal{B}( G(\cdot)u,v),\\
u\bullet a = G(a)u + \om_\mathcal{B}( \Om(a,\cdot),u),\\
a\bullet u = F(a)u - \om_\mathcal{B}( \theta(a,\cdot),u),\\
a\bullet b = a\circ b +\theta(a,b)+ L(a,b),\\
a\bullet f = f\circ \Ll_a^\circ,
\end{cases}
\end{equation}
for all $u,v\in \mathcal{B}$, $a,b\in \h$  and $f\in \h^*$, where $F(X)$ is symmetric with respect to $\om_{\mathcal{B}}$ and $L(a,b)(c) =L(a,c)(b)$ for all $a,b,c \in \h$.

By direct computation, one verifies that the triple $(F,G,\theta)$ is  an almost-semidirect product context of left-symmetric $\Ll$-algebras, and that $(F,G,\theta,L,\Omega)$ satisfies the compatibility relations in \eqref{les equation}. Consequently, $(\mathcal{A}, \bullet, \omega)$ is a double extension of the pre-symplectic left-symmetric algebra $(\mathcal{B} = I^\perp / I, \bullet_\mathcal{B}, \omega_\mathcal{B})$ by the commutative associative algebra $\mathfrak{h} = (\mathcal{A} / I^\perp, \circ)$, by means of $(F,G,L,\omega,\theta)$.
\end{proof}

\ssbegin{Theorem}\label{Double-generale}
Let $(\mathcal{A}, \bullet, \omega)$ be a pre-symplectic left-symmetric algebra such that the commutator ideal $[\mathcal{A}, \mathcal{A}]$ is degenerate with respect to $\omega$. Set
$
I := [\mathcal{A}, \mathcal{A}] \cap [\mathcal{A}, \mathcal{A}]^\perp.
$
Then $(\mathcal{A}, \bullet, \omega)$ is a double extension of the pre-symplectic left-symmetric algebra $(\mathcal{B} := I^\perp / I, \bullet_\mathcal{B}, \omega_\mathcal{B})$ by the commutative associative algebra $(\mathfrak{h} := \mathcal{A} / I^\perp, \circ)$ by means of $(F, G, \theta, L, \Omega)$.
\end{Theorem}

\begin{proof}
Assume that $I \neq \{0\}$. By Proposition~\ref{pr 6.7}, $I$ is a totally isotropic two-sided ideal of $(\mathcal{A}, \bullet)$, and its orthogonal complement $I^\perp$ is also a two-sided ideal of $(\mathcal{A}, \bullet)$. Hence, by applying Theorem~\ref{Lemme-ex}, it follows directly that $(\mathcal{A}, \bullet, \omega)$ is a double extension of the pre-symplectic left-symmetric algebra $(\mathcal{B} := I^\perp / I, \bullet_\mathcal{B}, \omega_\mathcal{B})$ by the commutative associative algebra $(\mathfrak{h} := \mathcal{A} / I^\perp, \circ)$ by means of $(F, G, \theta, L, \Omega)$.
\end{proof}

\ssbegin{Theorem}\label{Milnor0}
Any pre-symplectic left-symmetric algebra   is either a Milnor pre-symplectic algebra or can be obtained by a finite sequence of double extensions by a  commutative associative algebra starting from a Milnor pre-symplectic algebra.
\end{Theorem}

\begin{proof}
If $[\mathcal{A}, \mathcal{A}]$ is nondegenerate with respect to $\omega$, then according to Theorem~\ref{ ideal non degenere}, $(\mathcal{A}, \bullet, \omega)$ is a Milnor pre-symplectic algebra. If $[\mathcal{A}, \mathcal{A}]$ is degenerate, then according to Theorem~\ref{Double-generale}, $(\mathcal{A}, \bullet, \omega)$ is a double extension of a lower-dimensional pre-symplectic left-symmetric algebra $(\mathcal{B}_1, \bullet_{\mathcal{B}_1}, \omega_{\mathcal{B}_1})$ by a  commutative associative algebra $(\mathfrak{h}_1, \circ_1)$.

If the commutator ideal $[\mathcal{B}_1, \mathcal{B}_1]$ is nondegenerate with respect to $\omega_{\mathcal{B}_1}$, then $(\mathcal{B}_1, \bullet_{\mathcal{B}_1}, \omega_{\mathcal{B}_1})$ is a Milnor pre-symplectic algebra and the reduction stops. If it is degenerate, then $(\mathcal{B}_1, \bullet_{\mathcal{B}_1}, \omega_{\mathcal{B}_1})$ is itself a double extension of a pre-symplectic left-symmetric algebra $(\mathcal{B}_2, \bullet_{\mathcal{B}_2}, \omega_{\mathcal{B}_2})$ by a  commutative associative algebra $(\mathfrak{h}_2, \circ_2)$.

Since the vector space $\mathcal{A}$ is assumed to be finite-dimensional, the dimensions of the successive quotients decrease strictly at each step ($\dim \mathcal{B}_{k+1} < \dim \mathcal{B}_k$). Consequently, this algorithmic reduction process must terminate after finitely many steps. At the final step $n$, the core algebra $(\mathcal{B}_n, \bullet_{\mathcal{B}_n}, \omega_{\mathcal{B}_n})$ will necessarily have a nondegenerate commutator ideal, and thus constitutes a Milnor pre-symplectic algebra.
\end{proof}

Let $(\mathcal{A}, \bullet)$ be a left-symmetric algebra. It is well known that the adjoint map
\[
\mathrm{ad} : \mathcal{A} \longrightarrow \mathrm{End}(\mathcal{A}), \qquad \mathrm{ad}_u(v) = [u,v],
\]
defines a representation of the sub-adjacent Lie algebra $\mathcal{A}^{-}$.

\ssbegin{Lemma}\label{Le3}
Let $(\mathcal{A}, \bullet, \omega)$ be a pre-symplectic left-symmetric algebra over a field $\mathbb{K}$ (where $\mathbb{K}=\mathbb{R}$, $\mathbb{C}$, or $\mathbb{K}$ is algebraically closed) such that the commutator ideal $[\mathcal{A}, \mathcal{A}]$ is degenerate with respect to $\omega$.

\begin{enumerate}
    \item[\rm $(i)$] If $\mathbb{K}$ is algebraically closed, then there exists a $1$-dimensional totally isotropic two-sided ideal $J$ of $(\mathcal{A}, \bullet)$ such that $J \subseteq [\mathcal{A}, \mathcal{A}] \cap [\mathcal{A}, \mathcal{A}]^\perp$. Moreover, its orthogonal complement $J^\perp$ is also a two-sided ideal of $(\mathcal{A}, \bullet)$.
    \item[\rm $(ii)$] If $\mathbb{K}=\mathbb{R}$, then there exists a totally isotropic two-sided ideal $J$ of $(\mathcal{A}, \bullet)$ of dimension $1$ or $2$ such that $J \subseteq [\mathcal{A}, \mathcal{A}] \cap [\mathcal{A}, \mathcal{A}]^\perp$. Moreover, its orthogonal complement $J^\perp$ is also a two-sided ideal of $(\mathcal{A}, \bullet)$.
\end{enumerate}
\end{Lemma}

\begin{proof}
According to Proposition~\ref{condition plate}, we have $\Ll_{[u,v]_\bullet} = 0$ for all $u,v \in \mathcal{A}$. Hence, the sub-adjacent Lie algebra $\mathcal{A}^-$ is solvable. By Proposition~\ref{pr 6.7}, the  space $[\mathcal{A}, \mathcal{A}] \cap [\mathcal{A}, \mathcal{A}]^\perp$ is a two-sided ideal of $(\mathcal{A}, \bullet)$. Therefore, the restriction of the adjoint representation:
\[
\mathrm{ad} : \mathcal{A} \longrightarrow \mathrm{End}\big([\mathcal{A}, \mathcal{A}] \cap [\mathcal{A}, \mathcal{A}]^\perp\big)
\]
defines a well-defined representation of the solvable Lie algebra $\mathcal{A}^-$.

By Lie's Theorem, if $\mathbb{K}$ is algebraically closed \textup{(}resp. if $\mathbb{K}=\mathbb{R}$\textup{)}, there exists a minimal non-trivial Lie ideal $J \subseteq [\mathcal{A}, \mathcal{A}] \cap [\mathcal{A}, \mathcal{A}]^\perp$ of dimension $1$ \textup{(}resp. of dimension $1$ or $2$\textup{)}.

Since $J \subseteq [\mathcal{A}, \mathcal{A}] \cap [\mathcal{A}, \mathcal{A}]^\perp$, Proposition~\ref{condition plate} implies that $\Ll^\bullet_u = 0$ for all $u \in J$, which means $u \bullet v = 0$ for all $v \in \mathcal{A}$. Consequently, $J$ is a right ideal of $(\mathcal{A}, \bullet)$. Furthermore, for all $u \in J$ and $v \in \mathcal{A}$, the condition $[\mathcal{A}, J] \subseteq J$ implies that $[v, u] = v \bullet u - u \bullet v = v \bullet u \in J$. This shows that $J$ is also a left ideal of $(\mathcal{A}, \bullet)$, and thus it constitutes a two-sided ideal. 

Finally, since $J \subseteq [\mathcal{A}, \mathcal{A}] \cap [\mathcal{A}, \mathcal{A}]^\perp$ is a two-sided ideal of $(\mathcal{A}, \bullet)$, Proposition~\ref{reduit1} ensures that its orthogonal complement $J^\perp$ is also a two-sided ideal of $(\mathcal{A}, \bullet)$.
\end{proof}

\ssbegin{Theorem}\label{Closed}
Let $(\mathcal{A}, \bullet, \omega)$ be a pre-symplectic left-symmetric algebra over an algebraically closed field $\mathbb{K}$ such that the commutator ideal $[\mathcal{A}, \mathcal{A}]$ is degenerate with respect to $\omega$. Then there exists a $1$-dimensional totally isotropic two-sided ideal $I$ of $(\mathcal{A}, \bullet)$ such that $I \subseteq [\mathcal{A}, \mathcal{A}] \cap [\mathcal{A}, \mathcal{A}]^\perp$. Moreover, $(\mathcal{A}, \bullet, \omega)$ is a double extension of $(\mathcal{B} := I^\perp / I, \bullet_\mathcal{B}, \omega_\mathcal{B})$ by the $1$-dimensional algebra $\mathfrak{h} := \mathcal{A}/I^\perp \simeq \mathbb{K} d$, by means of $(D, \xi, b_0, \la)$.
\end{Theorem}

\begin{proof}
Since $[\mathcal{A}, \mathcal{A}]$ is degenerate with respect to $\omega$, Lemma~\ref{Le3} ensures the existence of a $1$-dimensional totally isotropic two-sided ideal $I \subseteq [\mathcal{A}, \mathcal{A}] \cap [\mathcal{A}, \mathcal{A}]^\perp$. According to Theorem~\ref{Lemme-ex}, the algebra $(\mathcal{A}, \bullet, \omega)$ is a double extension of $(\mathcal{B}, \bullet_\mathcal{B}, \omega_\mathcal{B})$ by the $1$-dimensional  algebra $\mathfrak{h} \simeq \mathbb{K} d$, by means of $(D, \xi, b_0, \lambda)$.
\end{proof}

\begin{Theorem}\label{Milnor10}
Any pre-symplectic left-symmetric algebra over an algebraically closed field $\mathbb{K}$ is either a Milnor pre-symplectic algebra or can be obtained by a finite sequence of double extensions by a one-dimensional commutative associative algebra starting from a Milnor pre-symplectic algebra.
\end{Theorem}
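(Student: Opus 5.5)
The plan is an induction on $\dim\mathcal{A}$, modelled on the proof of Theorem~\ref{Milnor0}, with Theorem~\ref{Closed} used in place of Theorem~\ref{Double-generale} at each reduction step. Let $(\mathcal{A},\bullet,\omega)$ be a pre-symplectic left-symmetric algebra over the algebraically closed field $\mathbb{K}$. If $[\mathcal{A},\mathcal{A}]$ is nondegenerate with respect to $\omega$, Theorem~\ref{ ideal non degenere} shows that $(\mathcal{A},\bullet,\omega)$ is a Milnor pre-symplectic algebra by means of $\mathcal{A}\bullet\mathcal{A}=[\mathcal{A},\mathcal{A}]$, and we are done. This covers the base cases: in dimension $0$, and whenever the product is trivial, $[\mathcal{A},\mathcal{A}]=\{0\}$ is nondegenerate.

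If $[\mathcal{A},\mathcal{A}]$ is degenerate, we apply Theorem~\ref{Closed}. It gives a one-dimensional totally isotropic two-sided ideal $I\subseteq[\mathcal{A},\mathcal{A}]\cap[\mathcal{A},\mathcal{A}]^\perp$, and it exhibits $(\mathcal{A},\bullet,\omega)$ as the double extension of $(\mathcal{B}_1:=I^\perp/I,\bullet_{\mathcal{B}_1},\omega_{\mathcal{B}_1})$ by the one-dimensional algebra $\mathfrak{h}=\mathcal{A}/I^\perp\simeq\mathbb{K}d$, by means of $(D,\xi,b_0,\lambda)$ as in Theorem~\ref{double-ex1}. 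The commutative associative structure on $\mathfrak{h}$ is the one given by Proposition~\ref{reduit1}$(3)$, namely $d\circ d=\lambda d$. By Proposition~\ref{reduit1}$(2)$, $\mathcal{B}_1$ is again a pre-symplectic left-symmetric algebra over $\mathbb{K}$. Since $\dim I=1$ and $\omega$ is nondegenerate, $\dim I^\perp=\dim\mathcal{A}-1$, and therefore $\dim\mathcal{B}_1=\dim\mathcal{A}-2$.

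We then apply the induction hypothesis to $\mathcal{B}_1$. Either $\mathcal{B}_1$ is Milnor, and $\mathcal{A}$ is obtained from a Milnor algebra by one double extension by a one-dimensional algebra; or $\mathcal{B}_1$ is obtained from a Milnor algebra $\mathcal{B}_n$ by finitely many such extensions, and one more extension produces $\mathcal{A}$. Since the dimension drops by exactly $2$ at each step, the chain $\mathcal{A}\to\mathcal{B}_1\to\mathcal{B}_2\to\cdots$ must terminate. It can only stop at a stage whose commutator ideal is nondegenerate, and such a stage is Milnor by Theorem~\ref{ ideal non degenere}.

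The main obstacle I expect is making sure that the inductive step genuinely produces a double extension by a one-dimensional \emph{commutative associative} algebra in the sense of Theorem~\ref{double-ex1}. This means checking that the data extracted in the proof of Theorem~\ref{Lemme-ex}, specialised to $\dim\mathfrak{h}=1$, has $L=\Omega=0$. Here $\Omega$ is skew on a one-dimensional space, so $\Omega=0$. Likewise $L(d,d)(d)=-L(d,d)(d)$, so $L=0$. The remaining relations \eqref{Lesrelation dim1} are then exactly the reduction of \eqref{les equation} and \eqref{produit-semi} already recorded in the paper. A one-dimensional algebra with $d\circ d=\lambda d$ is automatically commutative and associative, so no extra condition is needed there. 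The rest of the argument is bookkeeping.
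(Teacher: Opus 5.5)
Your proposal is correct and follows the paper's own argument. The paper proves this by repeating the proof of Theorem~\ref{Milnor0} with Theorem~\ref{Closed} in place of the general reduction: a nondegenerate commutator ideal yields a Milnor algebra, a degenerate one yields a double extension by a one-dimensional algebra of an algebra two dimensions smaller, and the process terminates by dimension. Your induction on $\dim\mathcal{A}$ is exactly this, and your check that $L=\Omega=0$ when $\dim\mathfrak{h}=1$ is the detail the paper leaves implicit in passing from Theorem~\ref{Lemme-ex} to Theorem~\ref{double-ex1}.
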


\begin{proof}
The proof follows analogously to the proof of Theorem \ref{Milnor0}.
\end{proof}

\ssbegin{Theorem}\label{db-1-2}
Let $(\mathcal{A}, \bullet, \omega)$ be a pre-symplectic left-symmetric algebra over the field $\mathbb{R}$ such that the commutator ideal $[\mathcal{A}, \mathcal{A}]$ is degenerate with respect to $\omega$. Then there exists a totally isotropic two-sided ideal $I \subseteq [\mathcal{A}, \mathcal{A}] \cap [\mathcal{A}, \mathcal{A}]^\perp$ of dimension $1$ or $2$. Moreover, $(\mathcal{A}, \bullet, \omega)$ is either:
\begin{itemize}
    \item A double extension of $(\mathcal{B} := I^\perp / I, \bullet_\mathcal{B}, \omega_\mathcal{B})$ by the one-dimensional  algebra $\mathfrak{h} := \mathcal{A}/I^\perp \simeq \mathbb{R} d$, by means of $(D, \xi, b_0, \la)$; or

    \item A double extension of $(\mathcal{B} := I^\perp / I, \bullet_\mathcal{B}, \omega_\mathcal{B})$ by the two-dimensional commutative associative algebra $\mathfrak{h} := \mathcal{A}/I^\perp \simeq \mathbb{R} d_1 \oplus \mathbb{R} d_2$, by means of $(D_i, \xi_i, a_{0}, b_{ij}, l_i, t_{ij}, s_{ij})$ for $i,j \in \{1,2\}$.
\end{itemize}
\end{Theorem}

\begin{proof}
Since $[\mathcal{A}, \mathcal{A}]$ is degenerate with respect to $\omega$, Lemma~\ref{Le3} ensures the existence of a totally isotropic two-sided ideal $I \subseteq [\mathcal{A}, \mathcal{A}] \cap [\mathcal{A}, \mathcal{A}]^\perp$ of dimension $1$ or $2$. According to Theorem~\ref{Lemme-ex}, we distinguish two geometric cases depending on the dimension of $I$.

If $\dim I = 1$, then $(\mathcal{A}, \bullet, \omega)$ is a double extension of $(\mathcal{B}, \bullet_\mathcal{B}, \omega_\mathcal{B})$ by the one-dimensional commutative associative algebra $\mathfrak{h} \simeq \mathbb{R} d$, by means of $(D, \xi, b_0, \lambda)$.

If $\dim I = 2$, then $(\mathcal{A}, \bullet, \omega)$ is a double extension of $(\mathcal{B}, \bullet_\mathcal{B}, \omega_\mathcal{B})$ by the two-dimensional commutative associative algebra $\mathfrak{h} \simeq \mathbb{R} d_1 \oplus \mathbb{R} d_2$, by means of $(D_i, \xi_i, a_{0}, b_{ij}, l_i, t_{ij}, s_{ij})$ for $i,j \in \{1,2\}$.
\end{proof}

\ssbegin{Theorem}\label{Milnor1}
Any pre-symplectic left-symmetric algebra  over the field $\mathbb{R}$ is either a Milnor pre-symplectic algebra or can be obtained by a finite sequence of double extensions by a one-dimensional or two-dimensional commutative associative algebra starting from a Milnor pre-symplectic algebra.
\end{Theorem}

\begin{proof}
The proof follows analogously to the proof of Theorem \ref{Milnor0}.
\end{proof}

\section{Classification of pre-symplectic left-symmetric algebras of dimension $\leq 4$}\label{section4}
In this section, we apply Theorems~\ref{ ideal non degenere} and  \ref{Double-generale} to classify all pre-symplectic left-symmetric algebras of dimension up to $4$.

Let us start with the determination of pre-symplectic left-symmetric algebras of dimension $2$.
\ssbegin{Proposition}\label{mil3}
Let $(\mathcal{A}, \bullet, \omega)$ be a two-dimensional pre-symplectic left-symmetric algebra. Then $(\mathcal{A}, \bullet, \omega)$ is symplectic-isomorphic to one of the following mutually non-isomorphic algebras:
\begin{enumerate}
    \item $\mathbb{K}^2$ with the trivial product and $\omega = e_{1}^{*} \wedge e_{2}^{*}$.
    \item $(\A_2,\om)$:   $e_1\bullet e_2=e_2$, $\om=e_{1}^{*}\wedge e_{2}^{*}$.
\end{enumerate}
\end{Proposition}

\begin{proof}
Assume that $(\mathcal{A}, \bullet, \omega)$ is a non-trivial pre-symplectic left-symmetric algebra with $\dim \mathcal{A} = 2$. If the commutator ideal $[\mathcal{A}, \mathcal{A}]$ is nondegenerate, Theorem~\ref{  ideal non degenere} implies that $(\mathcal{A}, \bullet, \omega)$ is a Milnor pre-symplectic algebra, which leads to a contradiction since non-trivial Milnor pre-symplectic algebras do not exist in dimension $2$. Thus, $[\mathcal{A}, \mathcal{A}]$ must be degenerate with respect to $\omega$, meaning that $
I := [\mathcal{A}, \mathcal{A}] \cap [\mathcal{A}, \mathcal{A}]^\perp \neq \{0\}.$

Since $[\mathcal{A}, \mathcal{A}] \neq \mathcal{A}$, then $\dim [\mathcal{A}, \mathcal{A}] = 1$. Hence, $I = [\mathcal{A}, \mathcal{A}]$. Let $u$ be a non-zero element of $I$. Since $\omega$ is nondegenerate, there exists an element $v \in \mathcal{A}$ such that $\omega(v, u) = 1$, so $\{v, u\}$ forms a basis of $\mathcal{A}$.

According to Proposition~\ref{condition plate}, since $u \in I$, we have $\Ll^\bullet_u = 0$, which means $u \bullet v = u \bullet u = 0$. Since $I$ is a left ideal of $(\A, \bullet)$, we must have $v \bullet u = \alpha u$ for some scalar $\alpha \in \mathbb{K}$. If $\alpha = 0$, then the algebra would be trivial, which contradicts our assumption. Hence, $\alpha \neq 0$.

Now, let us perform a change of basis by setting:
\[
e_1 := \frac{1}{\alpha}v \quad \text{and} \quad e_2 :=\al u.
\]
With respect to this new basis $\{e_1, e_2\}$, the only non-zero product and the forme $\om$ becomes:
\[
e_1 \bullet e_2 = e_2, \quad \omega(e_1, e_2)=1.
\]
 This shows that $(\mathcal{A}, \bullet, \omega)$ is symplectic-isomorphic to $(\mathcal{A}_2, \omega)$.
\end{proof}

We now focus on the classification in dimension $4$. According to Theorem~\ref{Milnor0}, any pre-symplectic left-symmetric algebra is either a Milnor pre-symplectic algebra or can be obtained from a Milnor pre-symplectic algebra via a finite sequence of double extensions by a  commutative associative algebras.

Let $(\mathcal{A}, \bullet, \omega)$ be a $4$-dimensional pre-symplectic left-symmetric algebra that arises as a double extension of a lower-dimensional pre-symplectic left-symmetric algebra $(\mathcal{B}, \bullet_{\mathcal{B}}, \omega_{\mathcal{B}})$ by a one- or two-dimensional commutative associative algebra $(\mathfrak{h}, \circ)$.

If $\dim \mathfrak{h} = 1$, then $\dim \mathcal{B} = 2$. If $\dim \mathfrak{h} = 2$, then $\dim \mathcal{B} = 0$. In this case, $\mathcal{A} \simeq \mathfrak{h} \oplus \mathfrak{h}^*$, and the structural maps vanish identically, i.e.,
\[
\xi_i = D_i = a_0 = b_{ij} = l_i = 0, \quad \forall i,j \in \{1,2\}.
\]

We now consider a two-dimensional pre-symplectic left-symmetric algebra $(\mathcal{B},\bullet_{\mathcal{B}},\omega_{\mathcal{B}})$. We look for linear maps $\xi, D:\mathcal{B} \to \mathcal{B}$, where $\xi$ is symmetric with respect to $\omega_{\mathcal{B}}$, and an element $b_0 \in \mathcal{B}$ such that $(\xi, D, b_0, \lambda)$ satisfies~\eqref{Lesrelation dim1}.

When $\mathcal{B}$ is trivial, these equations reduce to:
\begin{equation}
D^2 = D \circ \xi, \qquad
\xi \circ D = \lambda D, \qquad
D^* \circ D = 0, \qquad
D^*(b_0)=0.
\label{abelian}
\end{equation}

\ssbegin{Proposition}\label{Solution F, G, b_0 symp}
Let $(\mathcal{B}, \omega_{\mathcal{B}})$ be a 2-dimensional trivial pre-symplectic algebra. Then $(\xi, D, b_0, \lambda)$ satisfies the compatibility equations \eqref{abelian} if and only if there exists a symplectic basis $\mathbb{B} = \{e_1, e_2\}$ of $\mathcal{B}$ such that $\xi$, $D$, and $b_0$ are expressed in $\mathbb{B}$ by one of the following mutually exclusive cases:
\begin{enumerate}
    \item[\rm (a)] $\xi = D = 0$, $b_0 = \alpha e_1 + \beta e_2$, and $\lambda \in \mathbb{K}$, with $\alpha, \beta \in \mathbb{K}$.
    
    \item[\rm (b)] $\xi = 0$, $D = \begin{pmatrix} 0 & x \\ 0 & 0 \end{pmatrix}$, $b_0 = \alpha e_1$, and $\lambda = 0$, with $x \in \mathbb{K}^*$ and $\alpha \in \mathbb{K}$.
    
    \item[\rm (c)] $\xi = \begin{pmatrix} x & 0 \\ 0 & x \end{pmatrix}$, $D = \begin{pmatrix} x & y \\ 0 & 0 \end{pmatrix}$, $b_0 = \alpha e_2$, and $\lambda = x$, with $x \in \mathbb{K}^*$ and $y, \alpha \in \mathbb{K}$.
    
    \item[\rm (d)] $\xi = \begin{pmatrix} x & 0 \\ 0 & x \end{pmatrix}$, $D = 0$, $b_0 = \alpha e_1 + \beta e_2$, and $\lambda \in \mathbb{K}$, with $x \in \mathbb{K}^*$ and $\alpha, \beta \in \mathbb{K}$.
\end{enumerate}
\end{Proposition}

\begin{proof}
From the condition $\omega_{\mathcal{B}}(D(u), D(v)) = \omega_{\mathcal{B}}(u, D^*(D(v))) = 0$ for all $u,v \in \mathcal{B}$, we deduce that $\mathrm{Im}(D)$ is a totally isotropic subspace of $\mathcal{B}$. If $\mathrm{Im}(D) \neq \{0\}$, then since $\dim \mathcal{B} = 2$, we must have $\dim \mathrm{Im}(D) = 1$. Thus, we can choose a symplectic basis $\{e_1, e_2\}$ of $(\mathcal{B}, \omega_{\mathcal{B}})$ such that the matrix representing $D$ has the form:
\[
D = \begin{pmatrix} y & z \\ 0 & 0 \end{pmatrix}, \qquad \text{with } (y,z) \neq (0,0).
\]
On the other hand, since $\xi$ is  symmetric with respect to $\omega_{\mathcal{B}}$, its matrix representation in this symplectic basis must be scalar, taking the form:
\[
\xi = \begin{pmatrix} x & 0 \\ 0 & x \end{pmatrix}, \qquad \text{with } x \in \mathbb{K}.
\]

If $x = 0$, then $\xi = 0$. The equation $D^2 = D \circ \xi$ implies $D^2 = 0$, which yields $y = 0$. Since $(y,z) \neq (0,0)$, we have $z \neq 0$. Thus $\lambda = 0$. Since $D^*(b_0) = 0$, we obtain $b_0 = \alpha e_1$ for some $\alpha \in \mathbb{K}$, which corresponds exactly to case (b).

If $x \neq 0$, the identity $\xi \circ D = \lambda D$ yields $\lambda = x$. The equation $D^2 = D \circ \xi$ implies  $y^2 = xy$ and $yz = xz$. Since $(y,z) \neq (0,0)$, we obtain $y \neq 0$. Thus, we get $y = x$ and $z \in \mathbb{K}$. Since $D^*(b_0) = 0$, we get $b_0 = \alpha e_2$ for some $\alpha \in \mathbb{K}$, which produces case (c).

If $D = 0$, we choose an arbitrary symplectic basis $\{e_1, e_2\}$ of $(\mathcal{B}, \omega_{\mathcal{B}})$. Since $\xi$ is symmetric with respect to $\omega_{\mathcal{B}}$, it remains scalar:
\[
\xi = \begin{pmatrix} x & 0 \\ 0 & x \end{pmatrix}, \qquad \text{with } x \in \mathbb{K}.
\]
If $x = 0$, we have $\xi = 0$, and the condition $D^*(b_0) = 0$ is trivially satisfied for any $b_0 = \alpha e_1 + \beta e_2$, giving case (a). If $x \neq 0$, we directly obtain case (d).
\end{proof}

We treat now the non-trivial case. We consider the pre-symplectic left-symmetric algebra $(\mathcal{A}_2, \bullet_{\mathcal{A}_2}, \omega_{\mathcal{A}_2})$ defined in Proposition~\ref{mil3}, and we look for $(\xi, D, b_0, \lambda)$ satisfying system \eqref{Lesrelation dim1}.

\ssbegin{Proposition}\label{ntri}
Let $(\mathcal{A}_2, \bullet_{\mathcal{A}_2}, \omega_{\mathcal{A}_2})$ be the unique non-trivial pre-symplectic  left-symmetric algebra of dimension $2$. Then, $(\xi, D, b_0, \lambda)$ satisfies system \eqref{Lesrelation dim1} if and only if there exists a symplectic basis $\{e_1, e_2\}$ of $(\mathcal{A}_2, \omega_{\mathcal{A}_2})$ such that $\xi$, $D$, and $b_0$ are expressed by one of the following cases:
\begin{enumerate}
    \item[\rm (a)] $\xi = D = 0$, $b_0 = \alpha e_1$, and $\lambda \in \mathbb{K}$, with $\alpha \in \mathbb{K}$.
    
    \item[\rm (b)] $\xi = 0$, $D = \begin{pmatrix} 0 & 0 \\ x & 0 \end{pmatrix}$, $b_0 = -\lambda x e_2$, and $\lambda \in \mathbb{K}$, with $x \in \mathbb{K}^*$.
    
    \item[\rm (c)] $\xi = \begin{pmatrix} x & 0 \\ 0 & x \end{pmatrix}$, $D = \begin{pmatrix} x & 0 \\ 0 & 0 \end{pmatrix}$, $b_0 = \alpha e_1$, and $\lambda = x$, with $x \in \mathbb{K}^*$ and $\alpha \in \mathbb{K}$.

    \item[\rm (d)] $\xi = \begin{pmatrix} x & 0 \\ 0 & x \end{pmatrix}$, $D = \begin{pmatrix} x & 0 \\ y & 0 \end{pmatrix}$, $b_0 = 0$, and $\lambda = x$, with $x, y \in \mathbb{K}^*$.
\end{enumerate}
\end{Proposition}

\begin{proof}
Recall the relations \eqref{Lesrelation dim1}:
\begin{equation*}
\begin{cases}
D([u,v]_{\bullet})=0,\; u\bullet D(v)=v\bullet D(u), \\
D(u)\bullet v)=\xi(u)\bullet v, 
\xi(u\bullet v)=u\bullet\xi(v),\\ 
\xi\circ D=\lambda D+\Rr^{\bullet}_{b_0}, \;
D^2=D\circ \xi, \;
D^*\circ D=0, \;
G^*(b_0)=0,
\end{cases}
\label{Lesrelation dim1-2}
\end{equation*}
for all \(u,v \in \A_2\).

According to Proposition~\ref{mil3}, there exists a symplectic basis $\{e_1, e_2\}$ of $(\mathcal{A}_2, \omega)$ such that the product on $\mathcal{A}_2$ is given by
\[
e_1 \bullet e_2 = e_2.
\]
From the relation $\xi(u \bullet v) = u \bullet \xi(v)$, we obtain $\xi(e_1) = x e_1$ and $\xi(e_2) = y e_2$, where $x,y \in \mathbb{K}$. Since $\xi$ is symmetric with respect to $\omega$, it follows that $x = y$. On the other hand, from the relation $D([u,v]_{\bullet}) = 0$, we deduce that $D(e_2) = 0$. Using the relation $D(u) \bullet v = \xi(u) \bullet v$, we obtain $D(e_1) = x e_1 + z e_2$, where $z \in \mathbb{K}$. Hence, the relations $u \bullet D(v) = v \bullet D(u)$, $D^2 = D \circ \xi$, and $D^* \circ D = 0$ are automatically satisfied.

Now, let $b_0 = \alpha e_1 + \beta e_2$, where $\alpha, \beta \in \mathbb{K}$. From the relation $\xi \circ D = \lambda D + \Rr^{\bullet}_{b_0}$, we deduce that $x^2 = \lambda x$ and $\beta = x z- \lambda z$.

\begin{itemize}
    \item[\rm $\bullet$] If $x = 0$, then $\xi = 0$ and $\beta = -\lambda z$. From $G^*(b_0) = 0$, we deduce that $z \alpha = 0$.
    
    If $z = 0$, then $\beta = 0$, and therefore we obtain case $(a)$. If $z \neq 0$, then $\alpha = 0$, and we obtain case $(b)$.

    \item[\rm $\bullet$] If $x \neq 0$, then $\lambda = x$ and $\beta = 0$. From $G^*(b_0) = 0$, we deduce that $\al z = 0$.
    
    If $z = 0$, we obtain case $(c)$. If $z \neq 0$, then $\alpha = 0$, and we obtain case $(d)$.
\end{itemize}

\end{proof}

To establish the classification of 4-dimensional pre-symplectic left-symmetric algebras, we first recall the structural classification of 2-dimensional commutative associative algebras in the following proposition.
\ssbegin{Proposition}[\cite{Rakh}]\label{Rakhi}
Let $(\mathfrak{h}, \circ)$ be a non-trivial two-dimensional commutative associative algebra. Then, $(\mathfrak{h}, \circ)$ is isomorphic to one of the following non-isomorphic algebras:
\begin{enumerate}
\item[\rm (1)] $\mathfrak{h}_0$ is trivial algebra.
    \item[\rm (2)] $\mathfrak{h}_1$ : 
    $ d_1 \circ d_1 = d_1. $
    \item[\rm (3)] $\mathfrak{h}_2$ : $ d_1 \circ d_1 = d_2. $
    \item[\rm (4)] $\mathfrak{h}_3$ : 
    $ d_1 \circ d_1 = d_1,\quad d_2\circ d_2=d_2. $
    
    \item[\rm (5)] $\mathfrak{h}_4$ : 
    $ d_1 \circ d_1 = d_1, \quad d_1\circ d_2=d_2\circ d_1=d_2,\quad d_2 \circ d_2 =t d_1,$ with $t\in \mathbb{K}.$
\end{enumerate}
\end{Proposition}

We have now all the needed ingredients to give the complete classification list of pre-symplectic left-symmetric algebras of dimension $4$.

\ssbegin{Theorem}\label{classif4}
Every $4$-dimensional pre-symplectic left-symmetric algebra $(\mathcal{A}, \bullet, \omega)$ is symplectic-isomorphic to one of the pre-symplectic left-symmetric algebras:
\begin{enumerate}
    \item[\rm (1)] $(\mathbb{K}^4, \omega)$ with the trivial product and $\omega = e_1^* \wedge e_2^* + e_3^* \wedge e_4^*$.
    
    \item[\rm (2)] $\mathcal{A}_{4,1} = \big(\mathrm{span}(x_1, x_2, y_1, y_2), \bullet, \omega\big)$ where:
    \[
    \begin{cases} 
    y_1 \bullet x_1 = x_1, \quad y_1 \bullet x_2 = x_2, \\
    \omega = y_1^* \wedge y_2^* + x_1^* \wedge x_2^*.
    \end{cases}
    \]

    \item[\rm (3)] $\mathcal{A}_{4,2} = \big(\mathrm{span}(d, e_1, e_2, e), \bullet, \omega\big)$ where:
    \[
    \begin{cases} 
    d \bullet e_1 = \beta e, \quad d \bullet e_2 = -\alpha e, \quad d \bullet e = \lambda e, \quad d \bullet d = \alpha e_1 + \beta e_2 + \lambda d, \\
    \omega = e^* \wedge d^* + e_1^* \wedge e_2^*, \quad \text{with } \alpha, \beta, \lambda \in \mathbb{K}.
    \end{cases}
    \]
    
    \item[\rm (4)] $\mathcal{A}_{4,3} = \big(\mathrm{span}(d, e_1, e_2, e), \bullet, \omega\big)$ where:
    \[
    \begin{cases} 
    e_2 \bullet e_2 = -x e, \quad e_2 \bullet d = x e_1, \quad d \bullet e_2 = -\alpha e, \quad d \bullet d = \alpha e_1, \\
    \omega = e^* \wedge d^* + e_1^* \wedge e_2^*, \quad \text{with } x \in \mathbb{K}^* \text{ and } \alpha \in \mathbb{K}.
    \end{cases}
    \]

    \item[\rm (5)] $\mathcal{A}_{4,4} = \big(\mathrm{span}(d, e_1, e_2, e), \bullet, \omega\big)$ where:
    \[
    \begin{cases} 
    e_1 \bullet e_2 = -x e, \quad e_2 \bullet e_2 = -y e, \quad e_1 \bullet d = x e_1, \quad e_2 \bullet d = y e_1, \\ 
    d \bullet e_1 = x e_1 + \alpha e, \quad d \bullet e_2 = x e_2, \quad d \bullet e = x e, \quad d \bullet d = \alpha e_2 + x d, \\
    \omega = e^* \wedge d^* + e_1^* \wedge e_2^*, \quad \text{with } x \in \mathbb{K}^* \text{ and } y, \alpha \in \mathbb{K}.
    \end{cases}
    \]

    \item[\rm (6)] $\mathcal{A}_{4,5} = \big(\mathrm{span}(d, e_1, e_2, e), \bullet, \omega\big)$ where:
    \[
    \begin{cases} 
    d \bullet e_1 = x e_1 + \beta e, \quad d \bullet e_2 = x e_2 - \alpha e, \quad d \bullet e = \lambda e, \quad d \bullet d = \alpha e_1 + \beta e_2 + \lambda d, \\
    \omega = e^* \wedge d^* + e_1^* \wedge e_2^*, \quad \text{with } x \in \mathbb{K}^* \text{ and } \alpha, \beta, \lambda \in \mathbb{K}.
    \end{cases}
    \]

    \item[\rm (7)] $\mathcal{A}_{4,6} = \big(\mathrm{span}(d, e_1, e_2, e), \bullet, \omega\big)$ where:
    \[
    \begin{cases} 
    e_1 \bullet e_2 = e_2, \quad d \bullet e_2 = -\alpha e, \quad d \bullet e = \lambda e, \quad d \bullet d = \alpha e_1 + \lambda d, \\
    \omega = e^* \wedge d^* + e_1^* \wedge e_2^*, \quad \text{with } \alpha, \lambda \in \mathbb{K}.
    \end{cases}
    \]
    
    \item[\rm (8)] $\mathcal{A}_{4,7} = \big(\mathrm{span}(d, e_1, e_2, e), \bullet, \omega\big)$ where:
    \[
    \begin{cases} 
    e_1 \bullet e_1 = x e, \quad e_1 \bullet e_2 = e_2, \quad e_1 \bullet d = x e_2, \quad d \bullet e_1 = -\lambda x e, \\ 
    d \bullet e = \lambda e, \quad d \bullet d = -\lambda x e_2 + \lambda d, \\
    \omega = e^* \wedge d^* + e_1^* \wedge e_2^*, \quad \text{with } x \in \mathbb{K}^* \text{ and } \lambda \in \mathbb{K}.
    \end{cases}
    \]

    \item[\rm (9)] $\mathcal{A}_{4,8} = \big(\mathrm{span}(d, e_1, e_2, e), \bullet, \omega\big)$ where:
    \[
    \begin{cases} 
    e_1 \bullet e_2 = e_2 - x e, \quad e_1 \bullet d = x e_1, \quad d \bullet e_1 = x e_1, \quad d \bullet e_2 = x e_2 - \alpha e, \\ 
    d \bullet e = x e, \quad d \bullet d = \alpha e_1 + x d, \\
    \omega = e^* \wedge d^* + e_1^* \wedge e_2^*, \quad \text{with } x \in \mathbb{K}^* \text{ and } \alpha \in \mathbb{K}.
    \end{cases}
    \]

    \item[\rm (10)] $\mathcal{A}_{4,9} = \big(\mathrm{span}(d, e_1, e_2, e), \bullet, \omega\big)$ where:
    \[
    \begin{cases} 
    e_1 \bullet e_1 = y e, \quad e_1 \bullet e_2 = e_2 - x e, \quad e_1 \bullet d = x e_1 + y e_2, \quad d \bullet e_1 = x e_1, \\ 
    d \bullet e_2 = x e_2, \quad d \bullet e = x e, \quad d \bullet d = x d, \\
    \omega = e^* \wedge d^* + e_1^* \wedge e_2^*, \quad \text{with } x, y \in \mathbb{K}^*.
    \end{cases}
    \]

 \item[\rm (11)] $\mathcal{A}_{4,10} = \big(\mathrm{span}(d_1, d_2, e_1, e_2), \bullet, \omega\big)$ where: $$ \begin{cases} 
    d_1 \bullet e_1 = e_1, \quad 
    d_1 \bullet d_1 = l_1 e_2 + d_1, \quad d_1 \bullet d_2 = -l_1 e_1,  \\
    \omega = e_1^* \wedge d_1^* + e_2^* \wedge d_2^*,  \text{ with } l_1\in \mathbb{K}.
\end{cases}$$

 \item[\rm (12)] $\mathcal{A}_{4,11} = \big(\mathrm{span}(d_1, d_2, e_1, e_2), \bullet, \omega\big)$ where:  $$\begin{cases} 
     d_1 \bullet e_2 = e_1, \quad 
    d_1 \bullet d_1 = l_1 e_2 + d_2, \quad d_1 \bullet d_2 = -l_1 e_1, \\
    \omega = e_1^* \wedge d_1^* + e_2^* \wedge d_2^*,  \text{ with } l_1\in \mathbb{K}.
\end{cases}$$

 \item[\rm (13)] $\mathcal{A}_{4,12} = \big(\mathrm{span}(d_1, d_2, e_1, e_2), \bullet, \omega\big)$ where:  $$\begin{cases} 
     d_1 \bullet e_1 = e_1, \quad   d_2 \bullet e_2 = e_2, \quad
    d_1 \bullet d_1 = l_1 e_2 + d_1, \quad d_1 \bullet d_2 = -l_1 e_1, \\
    \omega = e_1^* \wedge d_1^* + e_2^* \wedge d_2^*,  \text{ with } l_1\in \mathbb{K}.
\end{cases}$$

 \item[\rm (14)] $\mathcal{A}_{4,13} = \big(\mathrm{span}(d_1, d_2, e_1, e_2), \bullet, \omega\big)$ where:  $$\begin{cases} 
     d_1 \bullet e_1 = e_1, \quad d_1 \bullet e_2 = e_2, \quad
    d_2 \bullet e_1 = t e_2, \quad d_2 \bullet e_2 = e_1, \quad
    d_1 \bullet d_1 = d_1, \\ \quad d_1 \bullet d_2 = d_2, 
    d_2 \bullet d_1 = l_2 e_1 + d_2, \quad d_2 \bullet d_2 = -l_2 e_2 + t d_1,\\
    \omega = e_1^* \wedge d_1^* + e_2^* \wedge d_2^*,  \text{ with } l_2, t\in \mathbb{K}.
\end{cases}$$

\item[\rm (15)] $\mathcal{A}_{4,14} = \big(\mathrm{span}(d_1, d_2, e_1, e_2), \bullet, \omega\big)$ where:  $$\begin{cases} 
    d_1 \bullet d_1 =  l_1e_2, \\ \quad d_1 \bullet d_2 = -l_1 e_1, 
    d_2 \bullet d_1 = l_2 e_2, \quad d_2 \bullet d_2 = -l_2 e_1,\\
    \omega = e_1^* \wedge d_1^* + e_2^* \wedge d_2^*,  \text{ with } l_1, l_2\in \mathbb{K}.
\end{cases}$$

\end{enumerate}
\end{Theorem}

\begin{proof}
Let $(\mathcal{A}, \bullet, \omega)$ be a $4$-dimensional pre-symplectic left-symmetric algebra. Based on the degeneracy behavior of the commutator ideal $[\mathcal{A}, \mathcal{A}]$ with respect to $\omega$, we naturally distinguish two cases.

\textbf{Case 1:} $[\mathcal{A}, \mathcal{A}]$ is nondegenerate with respect to $\omega$.  
Then, according to Theorem~\ref{  ideal non degenere}, $(\mathcal{A}, \bullet, \omega)$ is a Milnor pre-symplectic left-symmetric algebra. In dimension $4$, by Example~\ref{ex4}, this implies that $(\mathcal{A}, \bullet, \omega)$ is symplectic-isomorphic to $(\mathcal{A}_{4,1}, \omega)$.

\textbf{Case 2:} $[\mathcal{A}, \mathcal{A}]$ is degenerate with respect to $\omega$.  
Then, by Theorem~\ref{Double-generale}, $(\mathcal{A}, \bullet, \omega)$ is obtained by a double extension of a lower-dimensional pre-symplectic left-symmetric algebra $(\mathcal{B}, \bullet_{\mathcal{B}}, \omega_{\mathcal{B}})$ by a commutative associative algebra $(\mathfrak{h}, \circ)$ of dimension $1$ or $2$. The extension is governed by the structural parameters $(D, \xi, b_0, \lambda)$ or $(\delta_i, D_i, a_0, b_{ij}, l_i, t_{ij}, s_{ij})$ for $i,j \in \{1,2\}$.

\begin{itemize}
    \item If $\dim \mathfrak{h} = 1$, then $\dim \mathcal{B} = 2$. According to the low-dimensional classification provided in Proposition~\ref{mil3}, the base algebra $(\mathcal{B}, \bullet_{\mathcal{B}})$ is either the trivial  algebra or is symplectic-isomorphic to the unique non-trivial  $(\mathcal{A}_2, \omega_{\mathcal{A}_2})$. 
    
    By applying the solution spaces determined in Propositions~\ref{Solution F, G, b_0 symp} and \ref{ntri} respectively, and substituting them into the general double extension product formula \eqref{Produit1}, we exhaustively obtain the algebras $\mathcal{A}_{4,2}, \dots, \mathcal{A}_{4,9}$.
    
  \item If $\dim \mathfrak{h} = 2$, then $\dim \mathcal{B} = 0$, and therefore $(\mathcal{A}, \bullet, \omega) \simeq (\mathfrak{h} \oplus \mathfrak{h}^*, \bullet, \omega)$. According to Proposition~\ref{Rakhi},we rely on the classification of two-dimensional commutative associative algebras. Then, by solving the system~\eqref{Les relation dim2}, we deduce the scalars $t_{ij}$, $s_{ij}$, and $l_i$. Finally, by substituting these into the product formula~\eqref{Produit de dim2}, we obtain the algebras $\mathcal{A}_{4,10}, \dots, \mathcal{A}_{4,14}$.
\end{itemize}

\end{proof}

\bibliographystyle{elsarticle-num}

\end{document}